\DeclareMathOperator{\Div}{\mathrm{div}}
\newcommand{\coleq}{\colonequals}
\newcommand{\R}{\mathbb{R}} % real numbers
\newcommand{\PhO}{\mathcal{P}_h^1}
\newcommand{\Pho}{\mathcal{P}_h^2}
\newcommand{\Eh}{\mathcal{E}_h}
\newcommand{\Fh}{\mathcal{F}_h}
\newcommand{\Gh}{\mathcal{G}_h}
\newcommand{\Gho}{\mathcal{G}_h^o}
\newcommand{\Hh}{\mathcal{H}_h}
\newcommand{\dt}{\mathrm{d}t}
\newcommand{\vph}{\varphi_h}
\newcommand{\Omegah}{\Omega_h}
\newcommand{\Omegaho}{\Omega_h^o}
\newcommand{\Omegahe}{\Omega_h^e}
\newcommand{\omegah}{\omega_h}
\newcommand{\omegahe}{\omega_h^e}
\newcommand{\mrm}[1]{\mathrm{#1}}
\newcommand{\deriv}[2]{\frac{\mrm{d}#1}{\mrm{d}#2}}
\newcommand{\triangledl}{\triangleleft}
\newcommand{\triangleur}{\triangleright}
\newcommand{\dualsymbol}{\scriptscriptstyle\boxdot}
\newcommand{\simplsymbol}{\scriptscriptstyle\boxbslash}
\newcommand{\OmegahD}{\Omegah^{\dualsymbol}}
\newcommand{\OmegahS}{\Omegah^{\simplsymbol}}
\newcommand{\omegahD}{\omegah^{\dualsymbol}}
\newcommand{\omegahS}{\omegah^{\simplsymbol}}
\newcommand{\KD}{\mathcal{K}^{\dualsymbol}}
\newcommand{\KS}{\mathcal{K}^{\simplsymbol}}
\newcommand{\LD}{\mathcal{L}^{\dualsymbol}}
\newcommand{\LSDL}{\mathcal{L}^{\triangledl}}
\newcommand{\LSUR}{\mathcal{L}^{\triangleur}}
\newcommand{\weakto}{\rightharpoonup}
\theoremstyle{plain}
\newtheorem{thm}{Theorem}
\newtheorem{lem}[thm]{Lemma}
\newtheorem{prop}[thm]{Proposition}
\theoremstyle{definition}
\newtheorem{defn}[thm]{Definition}
\theoremstyle{remark}
\newtheorem{rem}{Remark}
\begin{document}

\title{Semi-discrete finite difference multiscale scheme for a concrete corrosion model: approximation estimates and convergence}

% \titlerunning{Semi-discrete finite difference multiscale scheme} % if too long for running head

\author{Vladim\'ir Chalupeck\'y\footnote{%
Institute of Mathematics for Industry, Kyushu University, 744 Motooka, Nishi-ku,
819-0395 Fukuoka, Japan.\newline
E-mail:~\url{chalupecky@imi.kyushu-u.ac.jp}}
\and
Adrian Muntean\footnote{%
Faculty of Mathematics and Computer Science, CASA - Center for Analysis, Scientific computing and
Applications, Institute for Complex Molecular Systems (ICMS),
Technical University of Eindhoven, PO Box 513, 5600 MB, Eindhoven, The
Netherlands.\newline
E-mail:~\url{a.muntean@tue.nl}}}

\date{June 24, 2011}

\maketitle

\begin{abstract}
We propose a semi-discrete finite difference multiscale scheme for a concrete
corrosion model consisting of a system of two-scale reaction-diffusion equations
coupled with an ode. We prove energy and regularity estimates and use them to
get the necessary compactness of the approximation estimates. Finally, we
illustrate numerically the behavior of the two-scale finite difference
approximation of the weak solution.

\vspace{1em}
\noindent
\textbf{Keywords\ } Multiscale reaction-diffusion equations\ \textperiodcentered\
Two-scale finite difference method\ \textperiodcentered\ Approximation of weak
solutions\ \textperiodcentered\ Convergence\ \textperiodcentered\ Concrete corrosion

\vspace{1em}
\noindent
\textbf{Mathematics Subject Classification (2000)\ }
MSC 
35K51\ \textperiodcentered\ %Initial-boundary value problems for second-order parabolic systems
35K57\ \textperiodcentered\ %Reaction-diffusion equations
65M06\ \textperiodcentered\ %(Finite difference method),
65M12\ \textperiodcentered\ %(Stability and convergence of numerical methods),
65M20 %(Method of lines)
\end{abstract}

\section{Introduction} \label{intro}

Biogenic sulfide corrosion of concrete is a bacterially mediated process of
forming hydrogen sulfide gas and the subsequent conversion to sulfuric acid that
attacks concrete and steel within wastewater environments. The hydrogen sulfide
gas is oxidized in the presence of moisture to form sulfuric acid that attacks
the matrix of concrete. The effect of sulfuric acid on concrete and steel
surfaces exposed to severe wastewater environments (like sewer pipes) is
devastating, and  is always associated with high maintenance costs.

The process can be briefly described as follows: Fresh domestic sewage entering
a wastewater collection system contains large amounts of  sulfates that, in the
absence of dissolved oxygen and nitrates, are reduced by  bacteria. Such
bacteria identified primarily from the  anaerobic species Desulfovibrio lead to
the fast formation of hydrogen sulfide (${\rm H_2S}$) via a complex pathway of
biochemical reactions. Once the gaseous ${\rm H_2S}$ diffuses into the headspace
environment above the wastewater, a sulfur oxidizing bacteria -- primarily
Thiobacillus aerobic bacteria -- metabolizes the ${\rm H_2S}$ gas and oxidize it
to sulphuric acid. It is worth noting that Thiobacillus  colonizes on pipe
crowns above the waterline inside the sewage system. This oxidizing process
prefers to take place where there is sufficiently high local temperature, enough
productions of hydrogen sulfide gas, high relative humidity, and atmospheric
oxygen; see section \ref{chemistry}  for more details on the involved chemistry
and transport mechanisms. Good overviews of the civil engineering literature on
the chemical aggression with acids of cement-based materials [focussing on
sulfate ingress] can be found in
\cite{BeddoeDorner2005,JahaniEtAl2001a,MullauerEtAl2009,TixierEtAl2003}.

If we decouple the mechanical corrosion part (leading to cracking and respective
spalling of the concrete matrix) from the reaction-diffusion-flow part, and look
only to the later one, the mathematical problem reduces to solving a partly
dissipative reaction-diffusion system posed in heterogeneous domains. Now,
assuming further that the concrete sample is perfectly covered by a
locally-periodic repeated regular microstructure, averaged and two-scale
reaction-diffusion systems modeling this corrosion processes can be derived;
that is precisely what we have done in \cite{FatimaEtAl2010} (formal asymptotics
for the locally-periodic case) and \cite{Tasnim2scale} (rigorous asymptotics via
two-scale convergence for the periodic case).

Here, our attention focusses on the two-scale corrosion model. Besides
performing the averaging procedure and ensuring the well-posedness of the
resulting model(s), we are interested in simulating numerically the influence of
the microstructural effects on observable (macroscopic) quantities. We refer the
reader to \cite{ChalupeckyEtAl2010}, where  we performed numerical simulations
of such a two-scale model. Now, is the right moment to raise the main question
of this paper:

\begin{center}
{\em Is the two-scale finite difference scheme used in \cite{ChalupeckyEtAl2010}
convergent, i.e., does it approximate the weak solution to the two-scale system?}
\end{center}

It is worth mentioning that there is a wealth of multiscale numerical techniques
that could (in principle) be used to tackle RD systems of the type treated here.
We mention at this point three approaches only: (i) the multiscale FEM method
developed by Babuska and predecessors (see the book \cite{Hou} for more Refs.),
(ii) computations on two-scale FEM spaces \cite{Matache} / two-scale Galerkin
approximations \cite{MunteanNeuss2010,MunteanLakkis2010}, and (iii) the
philosophy of heterogeneous multiscale methods (HMM) \cite{HMM}. We choose to
employ here multiscale finite differences (multiscale FD) mimicking the
[two-scale] tensorial structure present in (ii).  Our  hope is to become able to
marry at a later stage the two-scale Galerkin approximation ideas from
\cite{MunteanNeuss2010,Ludwig} in a HMM framework eventually based on finite
differences. Our standard reference for FD-HMM idea is \cite{AbdulleFD}.

The paper is structured as follows: Section \ref{background} introduces the
reader to the physico-chemical background of the corrosion process, two-scale
geometry, and setting of the equations. The numerical scheme together with  basic
ingredients like discrete operators, discrete Green formulae, discrete trace
inequalities etc.  are presented in section \ref{sec:numerical-scheme}, while the
approximation estimates together with the interpolation (extension) and
compactness steps are the subject of Section \ref{compactness} . We conclude the
paper with Section \ref{illustration} containing numerical illustrations of
the discrete approximation of the weak solution.

\section{Background and statement of the problem}\label{background}

\subsection{Two-scale geometry}\label{geometry}

We consider the evolution of a chemical corrosion process (sulfate attack)
taking place in one-dimensional macroscopic region $\Omega\coleq(0,L)$, $L>0$,
that represents a concrete sample along a line perpendicular to the pipe surface
with $x=0$ being a point at the inner surface in contact with sewer atmosphere
and $x=L$ being a point inside the concrete wall. Since we do not take into
account bulging of the inner surface due to the growth of soft gypsum
structures, the shape of the domain $\Omega$ does not change w.r.t. the time
variable $t$.

We denote the typical microstructure (or standard cell \cite{Hornung1997}) by
$Y:=(0,\ell)$, $\ell>0$. Usually cells in concrete contain a stationary water
film, and air and solid fractions in different ratios depending on the local
porosity. Generally, we expect that, due to the randomness of the pores
distribution in concrete, the choice of the microstructure essentially depends
on the macroscopic position $x\in\Omega$, i.e., we would then have $Y_x$; see
\cite{NoordenMuntean2010} for averaging issues of double porosity media
involving locally periodic ways of distributing microstructures, and
\cite{FatimaEtAl2010} for more comments directly related to the sulfatation
problem where pores are distributed in a locally periodic fashion. In this
paper, we restrict to the case when  the medium $\Omega$ is made of the {\em
same} microstructure $Y$ periodically repeated to pave perfectly the region.
Furthermore, since at the microscopic level the involved reaction and diffusion
processes take place in the pore water, we choose to denote by $Y$ only the wet
part of the pore. Efficient direct computations (with controlled accuracy and
known convergence rates) of scenarios involving $Y_x$ as well as the
corresponding error analysis are generally open problems in the field of
multiscale numerical simulation.

\subsection{Chemistry}\label{chemistry}

Sewage is rich in sulphur-containing materials and normally it is without any
action on concrete. Under suitable conditions like increased temperature or
lower flow velocity oxygen in sewage can become depleted. Aerobic, purifying
bacteria become inactive while anaerobic bacteria that live in slime layers at
the bottom of the sewer pipe proliferate. They obtain needed oxygen by reducing
sulfur compounds. Sulfur reacts with hydrogen and forms hydrogen sulfide (${\rm
H_2S}$), which then diffuses in sewage and enters sewer atmosphere. It moves in
the air space of the pipe and goes up towards the pipe wall. Gaseous ${\rm
H_2S}$ (further denoted as ${\rm H_2S(g)}$) enters into the concrete pores
(microstructures) via both air and water parts. ${\rm H_2S(g)}$ diffuses quickly
through the air-filled part of the porous structure over macroscopic distances,
while it dissolves in the thin, stationary water film of much smaller,
microscopic thickness that clings on the surface of the fabric.

There are many chemical reactions taking place in the porous microstructure of
sewer pipes which degrade the performance of the pipe structure depending on the
intensity of the interaction between the chemical reactions and the local
environment. Here we focus our attention on the following few relevant chemical
reactions:
\begin{subequations}\label{eq:reaction-equations}
  \begin{align}
    \label{eq:reaction-equations-1}
    {\rm H_2S(aq) + 2O_2} &\rightarrow {\rm 2H^+ + SO_4^{2-}} \\
    \label{eq:reaction-equations-2}
    {\rm 10H^{+} + SO_4^{-2} + organic\ matter} &\rightarrow {\rm H_2S(aq) +\ 4H_2O\ + oxidized\ matter} \\
    \label{eq:reaction-equations-3}
    {\rm H_2S(aq)} &\rightleftharpoons {\rm H_2S(g)} \\
    \label{eq:sulfatation}
    {\rm 2H_2O + H^+ + SO_4^{2-} + CaCO_3} &\rightarrow {\rm HCO_{3^-} + CaSO_4 \cdot 2H_2O.}
  \end{align}
\end{subequations}
Dissolved hydrogen sulfide (further denoted as ${\rm H_2S(aq)}$)
undergoes oxidation by aerobic bacteria living in these films and
sulfuric acid ${\rm H_2SO_4}$ is produced (reaction
\eqref{eq:reaction-equations-1}). This aggressive acid reacts with
calcium carbonate (i.e., with our concrete sample) and a soft gypsum
layer $({\rm CaSO_4\cdot 2H_2O})$ consisting of solid particles
(unreacted cement, aggregate), pore air and moisture is formed
(reaction \eqref{eq:sulfatation}).

The model considered in this paper pays special attention to the following
aspects:
\begin{itemize}
  \item[(i)] exchange of ${\rm H_2S}$ from water to the air phase and vice versa
    (reaction \eqref{eq:reaction-equations-3};
    %see \cite{BallsLiss1983} for more comments on mass transfer across air-liquid interfaces),
  \item[(ii)] production of gypsum at micro solid-water interfaces (reaction
    \eqref{eq:sulfatation}).
\end{itemize}
The transfer of ${\rm H_2S}$ is modeled by means of (deviations from) the Henry's law, while the
production of gypsum is incorporated in a non-standard non-linear reaction rate,
here denoted as $\eta$; see \eqref{eq:reaction-rate} for a precise choice.
Equation \eqref{eq:Henry} indicates the linearity of the Henry's law structure
we have in mind. The standard reference for modeling gas-liquid reactions at
stationary interfaces (including a derivation via first principles of the
Henry's law) is \cite{Danckwerts1970}.

\subsection{Setting of the equations}

Let $S\coleq(0,T)$ (with $T\in (0,\infty)$) be the time interval during which we
consider the process and let $\Omega$ and $Y$ as described in section
\ref{geometry}. We look for the unknown functions (mass concentrations of active
chemical species)
\begin{align*}
  u_1&:\Omega\times S \to \R &&\text{-- concentration of H$_2$S(g)},\\
  u_2&:\Omega\times Y\times S \to \R &&\text{-- concentration of H$_2$S(aq)},\\
  u_3&:\Omega\times Y\times S \to \R &&\text{-- concentration of H$_2$SO$_4$},\\
  u_4&:\Omega\times S \to \R &&\text{-- concentration of gypsum},
\end{align*}
that satisfy the following two-scale system composed of three weakly coupled
PDEs and one ODE
\begin{subequations}\label{eq:model}
\begin{align}
  \label{eq:model-1}
  \partial_t u_1 - d_1 \partial_{xx} u_1 &= d_2\partial_y u_2|_{y=0}, & \quad \text{
  in } & \Omega,\\
  \label{eq:model-2}
  \partial_t u_2 - d_2 \partial_{yy} u_2 &= -\zeta(u_2,u_3), & \quad
  \text{ in } & \Omega\times Y,\\
  \label{eq:model-3}
  \partial_t u_3 - d_3 \partial_{yy} u_3 &=  \zeta(u_2,u_3), & \quad
  \text{ in } & \Omega\times Y,\\
  \label{eq:model-4}
  \partial_t u_4 &= \eta( u_3|_{y=\ell}, u_4 ), & \quad \text{ in } & \Omega,
\end{align}
\end{subequations}
together with boundary conditions
\begin{subequations}\label{eq:model-bc}
  \begin{align}
  \label{eq:model-bc-1}
  u_1 &= u_1^D,& &\text{ on } \{x=0\}\times S,\\
  \label{eq:model-bc-2}
  d_1\partial_x u_1 &= 0,& &\text{ on } \{x=L\}\times S,\\
  \label{eq:model-bc-3}
  -d_2\partial_y u_2 &= Bi^M ( H u_1 - u_2 ),&  &\text{ on } \Omega\times \{y=0\}\times S,\\
  \label{eq:model-bc-4}
  d_2\partial_y u_2 &= 0,& &\text{ on } \Omega\times \{y=\ell\}\times S,\\
  \label{eq:model-bc-5}
  -d_3\partial_y u_3 &= 0,& &\text{ on } \Omega\times \{y=0\}\times S,\\
  \label{eq:model-bc-6}
  d_3\partial_y u_3 &= -\eta(u_3, u_4),& &\text{ on } \Omega\times
  \{y=\ell\}\times S,
  \end{align}
\end{subequations}
and initial conditions
\begin{equation}\label{eq:model-ic}
  \begin{aligned}
    u_1 &= u_1^0,& \text{ on } &\Omega\times\{t=0\},\\
    u_2 &= u_2^0,& \text{ on } &\Omega\times Y \times\{t=0\},\\
    u_3 &= u_3^0,& \text{ on } &\Omega\times Y \times\{t=0\},\\
    u_4 &= u_4^0,& \text{ on } &\Omega\times\{t=0\}.
  \end{aligned}
\end{equation}
Here $d_k$, $k\in\{1,2,3\}$, are the diffusion coefficients, $Bi^M$ is a
dimensionless Biot number, $H$ is the Henry's constant, $\alpha,\beta$ are
air-water mass transfer functions, while $\eta(\cdot)$ is a surface chemical
reaction. Note that $u_i$ ($i=1,\dots,4$) are mass concentrations. Furthermore,
all unknown functions, data and parameters carry dimensions.

\subsubsection{Technical assumptions}

The initial and boundary data, the parameters as well as the involved chemical
reaction rate are assumed to satisfy the following requirements:

 (A1) $d_k>0$, $k\in\{1,2,3\}$, $Bi^M>0$, $H>0$, $u_1^D>0$ are constants;
 % and $\alpha,\beta\inL^\infty_+(Y)$;

 (A2) The function $\zeta$ represents the biological oxidation volume reaction
 between the hydrogen sulfide and sulfuric acid and is defined by
\begin{align}\label{eq:zeta}
\zeta:\mathbb{R}^2\to\mathbb{R}, \  \zeta(r,s) &\coleq \alpha r -
\beta s,
\end{align}
where $\alpha,\beta\in L_+^\infty(Y)$.

(A3)  We assume the reaction rate $\eta:\mathbb{R}^2\to\mathbb{R}_+$
takes the form
\begin{equation}
  \label{eq:reaction-rate}
  \eta(r,s) =
  \begin{cases}
    kR(r)Q(s),& \text{ for all } r\geq 0, s\geq 0,\\
    0, &\text{ otherwise},
  \end{cases}
\end{equation}
where $k>0$ is the corresponding reaction constant. We assume $\eta$ to be
(globally) Lipschitz in both arguments. Furthermore, $R$ is taken to be
sublinear (i.e., $R(r)\leq r$ for all $r\in \mathbb{R}$, in the spirit of
\cite{Berz}), while $Q$ is bounded from above by a threshold $\bar c>0$.
Furthermore, let $R\in W^{1,\infty}(0,M_3)$ and $Q\in W^{1,\infty}(0,M_4)$ be
monotone functions (with $R$ strictly increasing), where the constants $M_3$ and
$M_4$ are the $L^\infty$ bounds on $u_3$ and, respectively, on $u_4$. Note that
\cite[Lemma~2]{ChalupeckyEtAl2010} gives the constants $M_3,M_4$ explicitly.

(A4)
$u_1^0\in H^2(\Omega)\cap L^\infty_+(\Omega)$,
$(u_2^0,u_3^0)\in\left[L^2(\Omega;H^1(Y))\right]^2\times \left[L^\infty_+(\Omega\times Y)\right]^2$,
$u_4^0\in H^1(\Omega)\cap L^\infty_+(\Omega)$;

\subsubsection{Micro-macro transmission}

Terms like
\begin{equation}\label{eq:Henry}
  Bi^M\big( H u_1(x,t) - u_2(x,y=0,t)\big)
\end{equation}
are usually referred to in the mathematical literature as production terms by
Henry's or Raoult's law; see \cite{BoehmEtAl1998}. The special feature of our
scenario is that the term \eqref{eq:Henry} {\em bridges} two distinct spatial
scales: one macro with $x\in \Omega$ and one micro with $y\in Y$. We call this
{\em micro-macro transmission condition}.

It is important to note that in the subsequent analysis we can replace
(\ref{eq:Henry}) by a more general nonlinear relationship
$$\mathcal{B}(u_1,u_2).$$
In that case assumption (A2) needs to be replaced, for instance, by
(A2')
\begin{align}\label{eq:zeta2}
  \mathcal{B}\in C^1([0,M_1]\times [0,M_2];\mathbb{R}),
  & \  \mathcal{B}\mbox{ globally Lipschitz in both arguments },
\end{align}
where $M_1$ and $M_2$ are sufficiently large positive constants\footnote{Typical
choices for $M_1,M_2$ are the $L^\infty$-estimates on $u_1$ and $u_2$; cf.
\cite{ChalupeckyEtAl2010} (Lemma 2) such $M_1,M_2$ do exist.}. Note that a
derivation of the precise structure of $\mathcal{B}$ by taking into account
(eventually by averaging of) the underlying microstructure information is still
an open problem.

\subsection{Weak formulation}

As a next step, we first reformulate our problem \eqref{eq:model},
\eqref{eq:model-bc}, \eqref{eq:model-ic} in an equivalent formulation that is
more suitable for numerical treatment. We introduce the substitution
$\tilde{u}_1\coleq u_1 - u_1^D$ to obtain
\begin{subequations}\label{eq:ref-model}
\begin{align}
  \label{eq:ref-model-1}
  \partial_t \tilde{u}_1 - d_1 \partial_{xx} \tilde{u}_1 &= d_2\partial_y u_2|_{y=0}, & \quad \text{
  in } & \Omega,\\
  \label{eq:ref-model-2}
  \partial_t u_2 - d_2 \partial_{yy} u_2 &= -\zeta(u_2,u_3), & \quad
  \text{ in } & \Omega\times Y,\\
  \label{eq:ref-model-3}
  \partial_t u_3 - d_3 \partial_{yy} u_3 &=  \zeta(u_2,u_3), & \quad
  \text{ in } & \Omega\times Y,\\
  \label{eq:ref-model-4}
  \partial_t u_4 &= \eta( u_3|_{y=\ell}, u_4 ), & \quad \text{ in } & \Omega,
\end{align}
\end{subequations}
together with boundary conditions
\begin{subequations}\label{eq:ref-model-bc}
  \begin{align}
  \label{eq:ref-model-bc-1}
  \tilde{u}_1 &= 0,& &\text{ on } \{x=0\}\times S,\\
  \label{eq:ref-model-bc-2}
  d_1\partial_x \tilde{u}_1 &= 0,& &\text{ on } \{x=L\}\times S,\\
  \label{eq:ref-model-bc-3}
  -d_2\partial_y u_2 &= Bi^M \big( H (\tilde{u}_1 + u_1^D) - u_2 \big),&  &\text{ on } \Omega\times \{y=0\}\times S,\\
  \label{eq:ref-model-bc-4}
  d_2\partial_y u_2 &= 0,& &\text{ on } \Omega\times \{y=\ell\}\times S,\\
  \label{eq:ref-model-bc-5}
  -d_3\partial_y u_3 &= 0,& &\text{ on } \Omega\times \{y=0\}\times S,\\
  \label{eq:ref-model-bc-6}
  d_3\partial_y u_3 &= -\eta(u_3, u_4),& &\text{ on } \Omega\times
  \{y=\ell\}\times S,
  \end{align}
\end{subequations}
and initial conditions
\begin{equation}\label{eq:ref-model-ic}
  \begin{aligned}
    \tilde{u}_1 &= u_1^0 - u_1^D \equalscolon \tilde{u}_1^0,& \text{ on } &\Omega\times\{t=0\},\\
    u_2 &= u_2^0,& \text{ on } &\Omega\times Y \times\{t=0\},\\
    u_3 &= u_3^0,& \text{ on } &\Omega\times Y \times\{t=0\},\\
    u_4 &= u_4^0,& \text{ on } &\Omega\times\{t=0\}.
  \end{aligned}
\end{equation}
We  refer to the system \eqref{eq:ref-model}, \eqref{eq:ref-model-bc},
\eqref{eq:ref-model-ic} as problem (P). Also, for the ease of notation, we
denote $\tilde{u}_1$ again as $u_1$ and $\tilde{u}_1^0$ as $u_1^0$.

Now, we can introduce our concept of weak solution.
\begin{defn}[Concept of weak solution]\label{def:weak}
  The vector of functions
  $(u_1,u_2,u_3,u_4)$
  with
  \begin{align}
    \label{eq:weak-spaces-1}
    u_1 &\in L^2(S,H_0^1(\Omega)),\\
    \label{eq:weak-spaces-2}
    \partial_t u_1 &\in L^2(S\times\Omega),\\
    \label{eq:weak-spaces-3}
    u_i &\in L^2(S,L^2(\Omega,H^1(Y))),& i&\in\{2,3\},\\
    \label{eq:weak-spaces-4}
    \partial_t u_i &\in L^2(S\times\Omega\times Y),& i&\in\{2,3\},\\
    \label{eq:weak-spaces-5}
    u_4(\cdot,x,y) &\in H^1(S),& \text{for a.e. }(x,y)&\in \Omega\times Y,
  \end{align}
  is called {\em a weak solution} to problem (P) if the identities
  \begin{align*}
    \int_\Omega\partial_t u_1\varphi_1 + d_1\int_\Omega\partial_x u_1\partial_x\varphi_1 &= \int_\Omega\partial_y u_2|_{y=0}\varphi_1,\\
    \int_\Omega\int_Y \partial_t u_2\varphi_2 + d_2\int_\Omega\int_Y\partial_y
    u_2\partial_y\varphi_2 &= -\int_\Omega\int_Y \zeta(u_2,u_3)\varphi_2 -
    \int_\Omega\partial_y u_2|_{y=0}\varphi_2,\\
    \int_\Omega\int_Y \partial_t u_3\varphi_3 + d_3\int_\Omega\int_Y\partial_y
    u_3\partial_y\varphi_3 &= \int_\Omega\int_Y \zeta(u_2,u_3)\varphi_3 -
    \int_\Omega\eta(u_3|_{y=\ell},u_4)\varphi_3,
  \end{align*}
  and
  \begin{equation*}
    \partial_t u_4 = \eta(u_3|_{y=\ell},u_4),
  \end{equation*}
  hold for a.e. $t\in S$ and for all
  $\varphi\coleq(\varphi_1,\varphi_2,\varphi_3)\in
  H_0^1(\Omega) \times \left[L^2(\Omega;H^1(Y))\right]^2$.
\end{defn}

We refer the reader to \cite[Theorem~3]{ChalupeckyEtAl2010} for statements
regarding the global existence and uniqueness of such weak solutions to problem
(P); see also \cite{MunteanNeuss2010} for the  analysis on a closely related
problem.

The main question we are dealing with here is:

\begin{center}
{\em How to approximate the weak solution in an easy and efficient way,
consistent with the structure of the model and the regularity of the data and
parameters indicated in (A1)--(A4)?}
\end{center}

%%%%%%%%%%%%%%%%%%%%%%%%%%%%%%%%%%%%%%%%%%%%%%%%%%%%%%%%%%%%%%%%%%%%%%%%%%%%%%%%
\section{Numerical scheme}\label{sec:numerical-scheme}

In order to solve numerically our multiscale system
\eqref{eq:model}--\eqref{eq:model-ic}, we use a semi-discrete approach leaving
the time variable continuous and discretizing both space variables $x$ and $y$
by finite differences on rectangular grids. In the following paragraphs we
introduce the necessary notation, the scheme and discrete scalar products and
norms.

%%%%%%%%%%%%%%%%%%%%%%%%%%%%%%%%%%%%%%%%%%%%%%%%%%%%%%%%%%%%%%%%%%%%%%%%%%%%%%%%
\subsection{Grids and grid functions}\label{sec:grids}

For spatial discretization, we subdivide the domain $\Omega$ into $N_x$
equidistant subintervals, the domain $Y$ into $N_y$ equidistant subintervals and
we denote by $h_x\coleq L/N_x$, $h_y\coleq\ell/N_y$, the corresponding spatial
step sizes. We denote by $h$ the vector $(h_x,h_y)$ with length $|h|$.

Let
\begin{align*}
  \Omegah &\coleq \{ x_i \coleq ih_x \ |\ i=0,\dotsc,N_x \},\\
  \Omegaho &\coleq \{ x_i \ |\ i=1,\dotsc,N_x \},\\
  Y_h &\coleq \{ y_j \coleq jh_y \ |\ i=0,\dotsc,N_y \},\\
  \Omegahe &\coleq \{ x_{i+1/2}\coleq (i+1/2)h_x \ |\ i=0,\dotsc,N_x-1 \},\\
  Y^e_h &\coleq \{ y_{j+1/2} \coleq (j+1/2)h_y \ |\ i=0,\dotsc,N_y-1 \},
\end{align*}
be, respectively, grid of all nodes in $\Omega$, grid of nodes in $\Omega$
without the node at $x=0$ (where Dirichlet boundary condition will be imposed),
grid of all nodes in $Y$, grid of nodes located in the middle of subintervals of
$\Omegah$, and grid of nodes located in the middle of subintervals of $Y_h$.
Finally, we define grids $\omega_h\coleq \Omegah\times Y_h$ and
$\omegahe\coleq\Omegah\times Y^e_h$.

Next, we introduce grid functions defined on the grids just described. Let $\Gh
\coleq \{ u_h\ |\ u_h:\Omegah\to\R \}$, $\Gho \coleq \{ u_h\ |\
u_h:\Omegaho\to\R \}$and $\Eh \coleq \{ \mrm{v}_h | \mrm{v}_h:\Omegahe\to\R \}$
be sets of grid functions approximating macro variables on $\Omega$. Let $\Fh
\coleq \{ u_h\ |\ u_h:\omega_h\to\R \}$ and $\Hh \coleq \{ \mrm{v}_h\ |\
\mrm{v}_h:\omegahe\to\R \}$ be sets of grid functions approximating micro
variables on $\Omega\times Y$. These grid functions can be identified with
vectors in $\R^N$, whose elements are the values of the grid function at the
nodes of the respective grid. Hence, addition of functions and multiplication of
a function by a scalar are defined as for vectors.

For $u_h\in\Gh$ we denote $u_i\coleq u_h(x_i)$, and for $u_h\in\Fh$ we will
denote $u_{ij}\coleq u_h(x_i,y_j)$. For $\mrm{v}_h\in\Eh$ we will denote
$\mrm{v}_{i+1/2}\coleq\mrm{v}_h(x_{i+1/2})$, and for $\mrm{v}_h\in\Hh$ we will
denote $\mrm{v}_{i,j+1/2}\coleq\mrm{v}_h(x_i,y_{j+1/2})$.

We frequently use functions from $\Fh$ restricted to the sets
$\Omega_h\times\{y=0\}$ or $\Omega_h\times\{y=\ell\}$. For $u_h\in\Fh$, we will
denote these restrictions as $u_h|_{y=0}$ and $u_h|_{y=\ell}$, and we will
interpret them as functions from $\Gh$, i.e., $u_h|_{y=0}\in\Gh$ and
$u_h|_{y=\ell}\in\Gh$.

%%%%%%%%%%%%%%%%%%%%%%%%%%%%%%%%%%%%%%%%%%%%%%%%%%%%%%%%%%%%%%%%%%%%%%%%%%%%%%%%
\subsection{Discrete operators}

In this section, we define difference operators defined on  linear spaces of
grid functions in such a way they mimic properties of the corresponding
differential operators and, together with the scalar products defined in Sec.
\ref{sec:scalar-products}, fulfill similar integral identities.

The discrete gradient operators $\nabla_h$ and $\nabla_{yh}$ are defined as
\begin{align*}
\nabla_h &: \Gh\to\Eh,& \qquad (\nabla_h u_h)_{i+\frac{1}{2}}&\coleq\frac{u_{i+1}-u_i}{h_x}, \quad u_h\in\Gh,\\
\nabla_{yh} &: \Fh\to\Hh,& \qquad (\nabla_{yh}
u_h)_{i,j+\frac{1}{2}}&\coleq\frac{u_{i,j+1}-u_{ij}}{h_y}, \quad
u_h\in\Fh,
\end{align*}
while the discrete divergence operators $\Div_h$ and $\Div_{yh}$ is
\begin{align*}
\Div_h &: \Eh\to\Gho,& \qquad (\Div_h \mrm{v}_h)_{i}&\coleq\frac{\mrm{v}_{i+\frac{1}{2}}-\mrm{v}_{i-\frac{1}{2}}}{h_x}, \quad \mrm{v}_h\in\Eh,\\
\Div_{yh} &: \Hh\to\Fh,& \qquad (\Div_{yh}
\mrm{v}_h)_{ij}&\coleq\frac{\mrm{v}_{i,j+\frac{1}{2}}-\mrm{v}_{i,j-\frac{1}{2}}}{h_y},
\quad \mrm{v}_h\in\Hh.
\end{align*}
The discrete Laplacian operators $\Delta_h$ and $\Delta_{yh}$ are defined as
$\Delta_h \coleq \Div_h\nabla_h:\Gh\to\Gho$ and $\Delta_{yh}\coleq
\Div_{yh}\nabla_{yh}:\Fh\to\Fh$, i.e., the following standard 3-point stencils
are obtained:
\begin{align*}
  (\Delta_h u_h)_{i} &= \frac{u_{i-1}-2u_i+u_{i+1}}{h_x^2},\qquad u_h\in\Gh,\\
  (\Delta_{yh} u_h)_{ij} &= \frac{u_{i,j-1}-2u_{ij}+u_{i,j+1}}{h_y^2},\qquad u_h\in\Fh.
\end{align*}

To complete the definition of the discrete divergence and Laplacian operators,
we need to specify values of grid functions on auxiliary nodes that fall outside
their corresponding grid. At a later point, we obtain these values  from the
discretization of boundary conditions by centered differences.

%%%%%%%%%%%%%%%%%%%%%%%%%%%%%%%%%%%%%%%%%%%%%%%%%%%%%%%%%%%%%%%%%%%%%%%%%%%%%%%%
\subsection{Semi-discrete scheme}

We can now construct a semi-discrete scheme for problem \eqref{eq:model}. Note
that we omit the explicit dependence on $t$ and we interchangeably use the
notation $\deriv{u_h}{t}$ and $\dot{u}_h$ for denoting the derivative of $u_h$
with respect to $t$.

\begin{defn}\label{def:scheme}
  A quadruple $\{u^1_h,u^2_h,u^3_h,u^4_h\}$ with
  $$u^1_h,u^4_h\in C^1([0,T];\Gh)\mbox{ and }u^2_h,u^3_h\in C^1([0,T];\Fh)$$
  is called  semi-discrete
  solution of \eqref{eq:model}, if it satisfies the following system of ordinary
  differential equations
  \begin{subequations}\label{eq:scheme}
    \begin{align}
      \label{eq:scheme-1}
      \deriv{u^1_h}{t} &= d_1 \Delta_h u^1_h - Bi^M\big( H(u^1_h+u_1^D) - u^2_h|_{y=0} \big),&&\text{ on } \Omegaho,\\
      \label{eq:scheme-2}
      \deriv{u^2_h}{t} &= d_2 \Delta_{yh} u^2_h - \zeta( u^2_h, u^3_h ),&&\text{ on } \omega_h,\\
      \label{eq:scheme-3}
      \deriv{u^3_h}{t} &= d_3 \Delta_{yh} u^3_h + \zeta( u^2_h, u^3_h ),&&\text{ on } \omega_h,\\
      \label{eq:scheme-4}
      \deriv{u^4_h}{t} &= \eta(u^3_h|_{y=\ell},u^4_h),&&\text{ on } \Omegah,
    \end{align}
  \end{subequations}
  together with the discrete boundary conditions ($i=0,\dotsc,N_x$)
  \begin{subequations}\label{eq:scheme-bc}
    \begin{align}
      \label{eq:scheme-bc-1}
      u^1_0 &= 0,\\
      \label{eq:scheme-bc-2}
      d_1\frac{1}{2}\Big( (\nabla_h u^1_h)_{N_x+\frac{1}{2}} + (\nabla_h u^1_h)_{N_x-\frac{1}{2}} \Big) &= 0,\\
      \label{eq:scheme-bc-3}
      -d_2\frac{1}{2}\Big( (\nabla_{yh} u^2_h)_{i,-\frac{1}{2}} + (\nabla_{yh} u^2_h)_{i,\frac{1}{2}} \Big) &= Bi^M\big( H(u^1_i + u_1^D) - u^2_{i,0} \big),\\
      \label{eq:scheme-bc-4}
      d_2\frac{1}{2}\Big( (\nabla_{yh} u^2_h)_{i,N_y+\frac{1}{2}} + (\nabla_{yh} u^2_h)_{i,N_y-\frac{1}{2}} \Big) &= 0,\\
      \label{eq:scheme-bc-5}
      -d_3\frac{1}{2}\Big( (\nabla_{yh} u^3_h)_{i,-\frac{1}{2}} + (\nabla_{yh} u^3_h)_{i,\frac{1}{2}} \Big) &= 0,\\
      \label{eq:scheme-bc-6}
      d_3\frac{1}{2}\Big( (\nabla_{yh} u^3_h)_{i,N_y+\frac{1}{2}} + (\nabla_{yh}
      u^3_h)_{i,N_y-\frac{1}{2}} \Big) &= -\eta( u^3_{i,N_y}, u^4_i),
    \end{align}
  \end{subequations}
  and the initial conditions
  \begin{equation}\label{eq:scheme-ic}
    \begin{aligned}
      u^1_h(0) &= \PhO u_1^0,& u^2_h(0) &= \Pho u_2^0,\\
      u^3_h(0) &= \Pho u_3^0,& u^4_h(0) &= \PhO u_4^0,
    \end{aligned}
  \end{equation}
  where $\PhO$ and $\Pho$ are suitable projection operators from $\Omega$ to
  $\Omegah$ and from $\Omega\times Y$ to $\omegah$, respectively.
\end{defn}

\begin{rem}
  The boundary conditions \eqref{eq:scheme-bc-2}--\eqref{eq:scheme-bc-6} are a
  centered-difference approximation of conditions \eqref{eq:model-bc} and are
  written so as to stress the relation between the two. Using the definition of
  discrete $\nabla_h$ and $\nabla_{yh}$ operators, \eqref{eq:scheme-bc} can be
  rewritten in terms of auxiliary values of $u^k_h$, $k=1,\dotsc,4$, on nodes
  outside the grids as follows:
  \begin{subequations}\label{eq:scheme-bcs}
    \begin{align}
      \label{eq:scheme-bcs-1}
      u^1_0 &= 0,\\
      \label{eq:scheme-bcs-2}
      u^1_{N_x+1} &= u^1_{N_x-1},\\
      \label{eq:scheme-bcs-3}
      u^2_{i,-1} &= u^2_{i,1} + \frac{2h_y}{d_2} Bi^M\big( H(u^1_i + u_1^D) - u^2_{i,0} \big),\\
      \label{eq:scheme-bcs-4}
      u^2_{i,N_y+1} &= u^2_{i,N_y-1},\\
      \label{eq:scheme-bcs-5}
      u^3_{i,-1} &= u^3_{i,1},\\
      \label{eq:scheme-bcs-6}
      u^3_{i,N_y+1} &= u^3_{i,N_y-1} - \frac{2h_y}{d_3} \eta( u^3_{i,N_y}, u^4_i).
    \end{align}
  \end{subequations}
\end{rem}

\begin{prop}
  Assume (A1)--(A4) to be fulfilled. Then there exists a unique semi-discrete
  solution $$\{u^1_h,u^2_h,u^3_h,u^4_h\}\in C^1([0,T];\Gh)\times
  C^1([0,T];\Fh)\times C^1([0,T];\Gh)\times C^1([0,T];\Fh)$$ in the sense of
  Definition \ref{def:scheme}.
\end{prop}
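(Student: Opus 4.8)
The plan is to recast the semi-discrete scheme as an initial-value problem for a system of ordinary differential equations on a finite-dimensional Euclidean space and then invoke the global Picard--Lindel\"of (Cauchy--Lipschitz) theorem. First I would collect the unknowns $u^1_h,u^2_h,u^3_h,u^4_h$ into a single vector $U$ living in a finite-dimensional space: using the Dirichlet condition \eqref{eq:scheme-bc-1} to fix $u^1_0=0$ and to remove it from the list of unknowns, the evolving degrees of freedom of $u^1_h$ are indexed by $\Omegaho$, those of $u^2_h,u^3_h$ by $\omegah$, and those of $u^4_h$ by $\Omegah$. The auxiliary values appearing in the Laplacian stencils at boundary nodes are eliminated through \eqref{eq:scheme-bcs-2}--\eqref{eq:scheme-bcs-6}; substituting them into $\Delta_h$ and $\Delta_{yh}$ turns the right-hand sides of \eqref{eq:scheme-1}--\eqref{eq:scheme-4} into explicit functions of the interior unknowns alone. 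The scheme then takes the form $\dot U = F(U)$, $U(0)=U_0$, where $U_0$ is supplied by the projected initial data \eqref{eq:scheme-ic}.

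The key step is to verify that $F$ is globally Lipschitz. On the finite-dimensional spaces $\Gh$, $\Gho$, $\Fh$ the discrete operators $\Delta_h$ and $\Delta_{yh}$ are linear maps, hence Lipschitz, and they remain so after the boundary stencils have absorbed the (linear) relations \eqref{eq:scheme-bcs-2}--\eqref{eq:scheme-bcs-5}; the Henry coupling $-Bi^M\big(H(u^1_h+u_1^D)-u^2_h|_{y=0}\big)$ and the oxidation term $\zeta(u^2_h,u^3_h)=\alpha u^2_h-\beta u^3_h$ are affine in the unknowns by (A1)--(A2) and therefore Lipschitz as well. The only genuinely nonlinear contribution is the surface reaction $\eta(u^3_h|_{y=\ell},u^4_h)$ entering \eqref{eq:scheme-4} and, through \eqref{eq:scheme-bcs-6}, the boundary stencil of \eqref{eq:scheme-3}; by assumption (A3) the map $\eta$ is globally Lipschitz in both arguments, and since the trace $u^3_h\mapsto u^3_h|_{y=\ell}$ is a bounded linear coordinate projection, this term is globally Lipschitz in $U$. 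As $F$ is a finite sum of globally Lipschitz maps, it is globally Lipschitz.

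Given this global bound, the Picard--Lindel\"of theorem yields a unique solution $U\in C^1([0,T])$ that exists on the whole interval $S=(0,T)$, with no finite-time blow-up; reading off the components gives $u^1_h,u^4_h\in C^1([0,T];\Gh)$ and $u^2_h,u^3_h\in C^1([0,T];\Fh)$, and the $C^1$ regularity follows immediately from continuity of $F$ together with the relation $\dot U=F(U)$. I expect the only delicate point to be the \emph{globality} of the Lipschitz estimate: local existence and uniqueness are automatic, but extension to all of $[0,T]$ rests entirely on the global (rather than merely local) Lipschitz continuity of $\eta$ postulated in (A3). Were $\eta$ only locally Lipschitz, one would first need a priori $L^\infty$ bounds on the discrete unknowns (in the spirit of the constants $M_3,M_4$ recalled in (A3)) to confine the trajectory to a compact set and rule out escape in finite time; under the stated assumptions this additional step is unnecessary.
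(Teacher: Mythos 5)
Your proposal is correct and is exactly the ``standard ode argument'' that the paper invokes without spelling out: reduce the scheme to a finite-dimensional system $\dot U=F(U)$ after eliminating the auxiliary boundary values, check that $F$ is globally Lipschitz (using the linearity of the discrete operators, the affine structure of the Henry and $\zeta$ terms from (A1)--(A2), and the global Lipschitz continuity of $\eta$ from (A3)), and apply Picard--Lindel\"of on $[0,T]$. Your closing remark on where the argument would need a priori $L^\infty$ bounds if $\eta$ were only locally Lipschitz is a useful observation, but under the stated assumptions the proof is complete as written.
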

\begin{proof}
  The proof, based on the standard ode argument, follows in a straightforward
  manner.
\end{proof}

%%%%%%%%%%%%%%%%%%%%%%%%%%%%%%%%%%%%%%%%%%%%%%%%%%%%%%%%%%%%%%%%%%%%%%%%%%%%%%%%
\subsection{Discrete scalar products and norms}\label{sec:scalar-products}

Next, we introduce scalar products and norms on the spaces of grid
functions $\Gh$, $\Eh$, $\Fh$, $\Gh$ and we show some basic integral
identities for the difference operators.

Let $(\gamma_i^1)_{i=0}^{N_x}$ and $(\gamma_j^2)_{j=0}^{N_y}$ be such that
\begin{equation}\label{eq:scalar-prod-coef}
    \gamma^1_{i} \coleq
    \begin{cases}
      1 & 1\leq i \leq N_x-1,\\
      \frac{1}{2} & i\in\{0,N_x\},
    \end{cases}, \qquad
    \gamma^2_{j} \coleq
    \begin{cases}
      1 & 1\leq j \leq N_y-1,\\
      \frac{1}{2} & j\in\{0,N_y\},
    \end{cases}
\end{equation}
and define the following discrete $L^2$ scalar products and the corresponding
discrete $L^2$ norms
\begin{align}
  \label{eq:sp-Gh}
  (u_h,v_h)_{\Gh} &\coleq h_x \sum_{x_{i}\in\Omegah}
  \gamma_i^1 u_{i} v_{i},&\qquad u_h, v_h &\in \Gh,\\
  \label{eq:norm-Gh}
  \| u_h \|_{\Gh} &\coleq \sqrt{ (u_h,u_h)_{\Gh} },&\qquad u_h&\in\Gh,
\end{align}
\begin{align}
  \label{eq:sp-Gho}
  (u_h,v_h)_{\Gho} &\coleq h_x \sum_{x_{i}\in\Omegaho}
  \gamma_i^1 u_{i} v_{i},&\qquad u_h, v_h &\in \Gho,\\
  \label{eq:norm-Gho}
  \| u_h \|_{\Gho} &\coleq \sqrt{ (u_h,u_h)_{\Gho} },&\qquad u_h&\in\Gho,
\end{align}
\begin{align}
  \label{eq:sp-Fh}
  (u_h,v_h)_{\Fh} &\coleq h_x h_y \sum_{x_{ij}\in\omega_h} \gamma_i^1\gamma_j^2 u_{ij} v_{ij},&\qquad u_h, v_h &\in \Fh,\\
  \label{eq:norm-Fh}
  \| u_h \|_{\Fh} &\coleq \sqrt{ (u_h,u_h)_{\Fh} },& \qquad u_h&\in\Fh,
\end{align}
\begin{align}
  \label{eq:sp-Eh}
  (\mrm{u}_h,\mrm{v}_h)_{\Eh} &\coleq h_x \sum_{x_{i+1/2}\in\Omegahe}
  \mrm{u}_{i+1/2} \mrm{v}_{i+1/2},&\qquad \mrm{u}_h, \mrm{v}_h &\in \Eh,\\
  \label{eq:norm-Eh}
  \| \mrm{u}_h \|_{\Eh} &\coleq \sqrt{ (\mrm{u}_h,\mrm{u}_h)_{\Eh} },& \qquad
  \mrm{u}_h&\in\Eh,
\end{align}
\begin{align}
  \label{eq:sp-Hh}
  (\mrm{u}_h,\mrm{v}_h)_{\Hh} &\coleq h_x h_y \sum_{x_{i,j+1/2}\in\omegahe}
  \gamma^1_i\mrm{u}_{i,j+1/2} \mrm{v}_{i,j+1/2},&\qquad \mrm{u}_h, \mrm{v}_h &\in \Hh,\\
  \label{eq:norm-Hh}
  \| \mrm{u}_h \|_{\Hh} &\coleq \sqrt{ (\mrm{u}_h,\mrm{u}_h)_{\Hh} },& \qquad
  \mrm{u}_h&\in\Hh.
\end{align}

It can be shown that a discrete equivalent of Green's formula holds for these
scalar products as well as other identities as is stated in the following
lemmas.

\begin{lem}[Discrete macro Green-like formula]\label{lem:sp-props-1}
  Let $u_h\in\Gh$ and $\mrm{v}_h\in\Eh$ such that
  \begin{gather}
    \label{eq:sp-Gh-cond-1}
    u_{0} = 0,\quad u_{N_x+1} = u_{N_x-1},\\
    \label{eq:sp-Gh-cond-2}
    \mrm{v}_{N_x+1/2} = -\mrm{v}_{N_x-1/2}.
  \end{gather}
  Then the following identity holds:
  \begin{equation}
    \label{eq:sp-Gh-prop-1}
    (u_h, \Div_h\mrm{v}_h)_{\Gho} = - (\nabla_h u_h, \mrm{v}_h )_{\Eh}.
    % \label{eq:sp-Gh-prop-2}
    % (u_h, \Delta_h u_h)_{\Gho} &= - (\nabla_h u_h, \nabla_h u_h )_{\Eh}.
  \end{equation}
\end{lem}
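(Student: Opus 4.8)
The plan is to prove this as a purely algebraic discrete integration-by-parts (summation-by-parts) identity: I would unwind both sides into explicit finite sums over the grid nodes and match them term by term. There is no functional-analytic content here, so the whole argument reduces to careful bookkeeping of the two endpoint contributions.

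First I would expand the left-hand side using the definition of $\Div_h$ and of the $\Gho$ scalar product \eqref{eq:sp-Gho}. Since $\Omegaho=\{x_1,\dots,x_{N_x}\}$ and the weights satisfy $\gamma_i^1=1$ for $1\le i\le N_x-1$ while $\gamma_{N_x}^1=\tfrac12$, and since the $h_x$ from the scalar product cancels the $1/h_x$ in $\Div_h$, this yields
\[
(u_h,\Div_h\mrm{v}_h)_{\Gho}=\sum_{i=1}^{N_x-1}u_i(\mrm{v}_{i+1/2}-\mrm{v}_{i-1/2})+\tfrac12 u_{N_x}(\mrm{v}_{N_x+1/2}-\mrm{v}_{N_x-1/2}).
\]
Next I would absorb the boundary contribution at $i=N_x$: the half-weight $\gamma_{N_x}^1=\tfrac12$ combines with the hypothesis \eqref{eq:sp-Gh-cond-2}, namely $\mrm{v}_{N_x+1/2}=-\mrm{v}_{N_x-1/2}$, to collapse the last term to $-u_{N_x}\mrm{v}_{N_x-1/2}$. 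This is precisely the step that reconciles the half-weight at the Neumann end with the reflected auxiliary value of $\mrm{v}_h$.

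The remaining step is Abel summation: splitting the telescoping difference and merging the $-u_{N_x}\mrm{v}_{N_x-1/2}$ term into one of the two sums produces
\[
\sum_{i=1}^{N_x-1}u_i\mrm{v}_{i+1/2}-\sum_{i=1}^{N_x}u_i\mrm{v}_{i-1/2}.
\]
Comparing this with the expansion $-(\nabla_h u_h,\mrm{v}_h)_{\Eh}=-\sum_{i=0}^{N_x-1}(u_{i+1}-u_i)\mrm{v}_{i+1/2}$ shows the two agree, the spurious $u_0\mrm{v}_{1/2}$ term dropping out through the Dirichlet-type condition $u_0=0$ of \eqref{eq:sp-Gh-cond-1}. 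This closes the identity.

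I do not expect a genuine obstacle; the difficulty is only in keeping the endpoint terms straight, since the two auxiliary hypotheses play asymmetric roles: $u_0=0$ removes the $x=0$ term after the index shift, whereas $\mrm{v}_{N_x+1/2}=-\mrm{v}_{N_x-1/2}$ together with $\gamma_{N_x}^1=\tfrac12$ removes the $x=L$ term. I would also remark that the second relation listed in \eqref{eq:sp-Gh-cond-1}, $u_{N_x+1}=u_{N_x-1}$, is not actually needed for the identity itself; it is recorded because it is equivalent to \eqref{eq:sp-Gh-cond-2} exactly in the case $\mrm{v}_h=\nabla_h u_h$, which is the setting in which the lemma will later be applied to derive the discrete energy estimates.
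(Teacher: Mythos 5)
Your proof is correct and is exactly the standard summation-by-parts bookkeeping that the paper itself omits (the lemma is stated without a printed proof): expanding $(u_h,\Div_h\mrm{v}_h)_{\Gho}$ with the half-weight $\gamma_{N_x}^1=\tfrac12$, using $\mrm{v}_{N_x+1/2}=-\mrm{v}_{N_x-1/2}$ to collapse the $i=N_x$ term and $u_0=0$ to kill the shifted boundary term, and reindexing. Your side remark is also accurate: $u_{N_x+1}=u_{N_x-1}$ plays no role in the identity itself and is equivalent to \eqref{eq:sp-Gh-cond-2} precisely when $\mrm{v}_h=\nabla_h u_h$, the situation in which the lemma is invoked for the energy estimates.
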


\begin{lem}[Discrete micro-macro Green-like formula]\label{lem:sp-props-2}
  Let $u_h\in\Fh$ and $\mrm{v}_h\in\Hh$ such that
  \begin{gather}
    \label{eq:sp-Fh-cond-1}
    -\frac{1}{2}\left(\mrm{v}_{k,-1/2} + \mrm{v}_{k,1/2}\right) = \delta^1_k,\quad
    \frac{1}{2}\left(\mrm{v}_{k,N_y-1/2} + \mrm{v}_{k,N_y+1/2}\right) = \delta^2_k,\\
    \label{eq:sp-Fh-cond-2}
    u_{k,-1} = u_{k,1} + 2h_y\delta^1_k,\qquad u_{k,N_y+1} = u_{k,N_y-1} + 2h_y\delta^2_k,
  \end{gather}
  for $i=0,\dotsc,N_x$, and $\delta^1_h,\delta^2_h\in\Gh$. Then the following
  identity holds:
  \begin{equation}
    \label{eq:sp-Fh-prop-1}
    (u_h, \Div_{yh}\mrm{v}_h)_{\Fh} = - (\nabla_{yh} u_h, \mrm{v}_h)_{\Hh} +
    (u_h|_{y=0}, \delta^1_h)_{\Gh} + (u_h|_{y=N_y}, \delta^2_h)_{\Gh}.
    % \label{eq:sp-Fh-prop-2}
    % (u_h, \Delta_{yh} u_h)_{\Fh} &= - (\nabla_{yh} u_h, \nabla_{yh} u_h )_{\Hh}
    % + ( u_h|_{y=0}, \delta^1_h )_{\Gh} + ( u_h|_{y=N_y}, \delta^2_h)_{\Gh}.
  \end{equation}
\end{lem}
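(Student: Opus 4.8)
The plan is to prove \eqref{eq:sp-Fh-prop-1} by a discrete summation-by-parts (Abel summation) carried out in the micro-variable $y$ for each fixed macro-node $x_i$, and then to reassemble the $x_i$-sum weighted by $h_x\gamma_i^1$. Since both the $\Fh$- and the $\Hh$-scalar products factor as $h_x\gamma_i^1$ times an inner sum over $j$ (the weights $\gamma_i^1$ being \emph{identical} in the two products), the macro direction plays no active role: once the common factor $h_x\gamma_i^1$ is stripped off, the whole identity reduces to a purely one-dimensional statement in the index $j$. This mirrors the structure of the macro formula \eqref{eq:sp-Gh-prop-1}, only now with the trapezoidal weights $\gamma_j^2$ and two boundaries to control.

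First I would expand the left-hand side with the definition of $\Div_{yh}$ and cancel the $h_y$ from the divergence against the $h_y$ in the $\Fh$-product, so that $(u_h,\Div_{yh}\mrm{v}_h)_{\Fh}=h_x\sum_i\gamma_i^1 B_i$ with $B_i\coleq\sum_{j=0}^{N_y}\gamma_j^2 u_{ij}\big(\mrm{v}_{i,j+1/2}-\mrm{v}_{i,j-1/2}\big)$. Next I would perform the Abel summation on $B_i$, regrouping the telescoping differences by the half-index: each interior node $\mrm{v}_{i,m+1/2}$ with $0\le m\le N_y-1$ collects the coefficient $\gamma_m^2 u_{i,m}-\gamma_{m+1}^2 u_{i,m+1}$, while the two ghost values $\mrm{v}_{i,-1/2}$ and $\mrm{v}_{i,N_y+1/2}$ collect $-\gamma_0^2 u_{i,0}$ and $\gamma_{N_y}^2 u_{i,N_y}$, respectively.

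Then I would split the interior coefficient as $\gamma_m^2 u_{i,m}-\gamma_{m+1}^2 u_{i,m+1}=(u_{i,m}-u_{i,m+1})+[\text{defect}]$, where the defect vanishes for $1\le m\le N_y-2$ and, because $\gamma_0^2=\gamma_{N_y}^2=\tfrac12$, leaves exactly $-\tfrac12 u_{i,0}\mrm{v}_{i,1/2}$ at $m=0$ and $+\tfrac12 u_{i,N_y}\mrm{v}_{i,N_y-1/2}$ at $m=N_y-1$. The leading part $\sum_m(u_{i,m}-u_{i,m+1})\mrm{v}_{i,m+1/2}$, after reinserting $h_x\gamma_i^1$ and summing over $i$ and $m$, equals $-(\nabla_{yh}u_h,\mrm{v}_h)_{\Hh}$ by the definition of the $\Hh$-product, the extra $h_y$ arising from $u_{i,m}-u_{i,m+1}=-h_y(\nabla_{yh}u_h)_{i,m+1/2}$.

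The decisive step is to dispose of the remaining boundary terms. Combining the two defects with the two ghost contributions produces precisely $-\tfrac12 u_{i,0}(\mrm{v}_{i,-1/2}+\mrm{v}_{i,1/2})+\tfrac12 u_{i,N_y}(\mrm{v}_{i,N_y-1/2}+\mrm{v}_{i,N_y+1/2})$, and the hypothesis \eqref{eq:sp-Fh-cond-1} is tailored exactly to these symmetric half-sums, turning them into $u_{i,0}\delta^1_i$ and $u_{i,N_y}\delta^2_i$; reattaching $h_x\gamma_i^1$ and summing over $i$ then yields $(u_h|_{y=0},\delta^1_h)_{\Gh}+(u_h|_{y=N_y},\delta^2_h)_{\Gh}$ by the definition of the $\Gh$-product. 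The main obstacle, and the only place where care is genuinely needed, is exactly this endpoint bookkeeping: the trapezoidal weights $\gamma_j^2=\tfrac12$ at $j\in\{0,N_y\}$ spoil the clean telescoping, and one must check that the defect they generate has precisely the averaged form $\tfrac12(\mrm{v}_{\cdot,-1/2}+\mrm{v}_{\cdot,1/2})$ prescribed by \eqref{eq:sp-Fh-cond-1}, so that no uncontrolled boundary residue survives. I would remark finally that condition \eqref{eq:sp-Fh-cond-2} on the ghost values of $u_h$ is not used in deriving the identity; it is the companion relation that is automatically consistent with \eqref{eq:sp-Fh-cond-1} in the typical application $\mrm{v}_h=\nabla_{yh}u_h$.
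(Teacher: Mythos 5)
Your proof is correct: the Abel summation in $j$ for fixed $i$, the identification of the telescoping part with $-(\nabla_{yh}u_h,\mrm{v}_h)_{\Hh}$, and the careful matching of the trapezoidal-weight defects at $j\in\{0,N_y\}$ with the averaged ghost conditions \eqref{eq:sp-Fh-cond-1} all check out, and your observation that \eqref{eq:sp-Fh-cond-2} is not needed for the identity itself (only for consistency in the application $\mrm{v}_h=\nabla_{yh}u_h$) is accurate. The paper omits the proof of this lemma entirely, merely asserting it holds, so there is nothing to compare against; your direct summation-by-parts computation is evidently the intended argument.
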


We also frequently make use of the following discrete trace inequality:

\begin{lem}[Discrete trace inequality]\label{lem:discrete-trace}
  For $u_h\in\Fh$ there exists a positive constant $C$ depending only on
  $\Omega$ such that
  \begin{equation} \label{eq:discrete-trace}
    \|u_h|_{y=\ell}\|_{\Gh} \leq C(\|u_h\|_{\Fh}+\|\nabla_{yh}u_h\|_{\Hh}).
  \end{equation}
\end{lem}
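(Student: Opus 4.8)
The plan is to reduce this two-dimensional estimate to a purely one-dimensional (in the $y$-variable) discrete trace bound, exploiting that all three norms carry the \emph{same} weight $h_x\gamma_i^1$ in the $x$-direction. Writing out the definitions, $\|u_h|_{y=\ell}\|_{\Gh}^2 = h_x\sum_i\gamma_i^1 u_{i,N_y}^2$, while $\|u_h\|_{\Fh}^2 = h_x\sum_i\gamma_i^1 A_i$ and $\|\nabla_{yh}u_h\|_{\Hh}^2 = h_x\sum_i\gamma_i^1 B_i$, with the fibre quantities $A_i\coleq h_y\sum_{j=0}^{N_y}\gamma_j^2 u_{ij}^2$ and $B_i\coleq h_y\sum_{j=0}^{N_y-1}\big((u_{i,j+1}-u_{ij})/h_y\big)^2$. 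Hence it suffices to prove the fibrewise inequality $u_{i,N_y}^2\le C(A_i+B_i)$ with $C$ independent of $h$ and $i$; multiplying by $h_x\gamma_i^1$, summing over $i$, and using $\sqrt{a^2+b^2}\le a+b$ then yields \eqref{eq:discrete-trace}.

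For the fibrewise bound I fix $i$, abbreviate $w_j\coleq u_{ij}$, and start from the telescoping identity $w_{N_y}^2 = w_k^2 + \sum_{j=k}^{N_y-1}(w_{j+1}^2-w_j^2)$, valid for every $k\in\{0,\dots,N_y\}$. The key device, which is what keeps the constant independent of $h$, is to \emph{average} this identity over the starting index $k$ against the trapezoidal weights $h_y\gamma_k^2$. Since $h_y\sum_{k=0}^{N_y}\gamma_k^2 = h_y N_y = \ell$, this produces exactly
\begin{equation*}
  \ell\, w_{N_y}^2 = A_i + h_y\sum_{k=0}^{N_y}\gamma_k^2\sum_{j=k}^{N_y-1}(w_{j+1}^2-w_j^2),
\end{equation*}
so that the averaged constant term reproduces precisely the fibre $\Fh$-mass $A_i$, while the remainder will be controlled by $A_i$ and $B_i$ jointly.

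It then remains to estimate the double sum. Bounding $\sum_{j=k}^{N_y-1}$ by $\sum_{j=0}^{N_y-1}$ (now independent of $k$) and again using $h_y\sum_k\gamma_k^2=\ell$, the remainder is at most $\ell\sum_{j=0}^{N_y-1}|w_{j+1}^2-w_j^2|$. Factoring $w_{j+1}^2-w_j^2=(w_{j+1}-w_j)(w_{j+1}+w_j)$ and applying the discrete Cauchy--Schwarz inequality gives
\begin{equation*}
  \sum_{j=0}^{N_y-1}|w_{j+1}^2-w_j^2| \le \Big(\sum_{j=0}^{N_y-1}(w_{j+1}-w_j)^2\Big)^{1/2}\Big(\sum_{j=0}^{N_y-1}(|w_{j+1}|+|w_j|)^2\Big)^{1/2}.
\end{equation*}
Here $\sum_j(w_{j+1}-w_j)^2 = h_y B_i$, while $(|w_{j+1}|+|w_j|)^2\le 2(w_{j+1}^2+w_j^2)$ gives $\sum_j(|w_{j+1}|+|w_j|)^2 \le 4\sum_{j=0}^{N_y}w_j^2 \le 8 A_i/h_y$, the last step using $\gamma_j^2\ge\tfrac12$. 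The two powers of $h_y$ cancel, so the remainder is bounded by a constant multiple of $\ell\sqrt{A_i B_i}$; a final Young inequality $\sqrt{A_iB_i}\le\tfrac12(A_i+B_i)$ turns $\ell w_{N_y}^2\le A_i + C'\ell\sqrt{A_iB_i}$ into $u_{i,N_y}^2\le C(A_i+B_i)$ with $C$ depending only on $\ell$, as required.

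I expect the only genuinely delicate point to be the bookkeeping of the powers of $h_y$: the difference quotient defining $\nabla_{yh}$ carries a factor $1/h_y$, and it is precisely the averaging against $h_y\gamma_k^2$ that balances dimensions so that no inverse power of $h_y$ survives. A careless estimate that retains a single nodal value $w_k^2$ for a \emph{fixed} $k$ cannot be dominated by the integral-type norm $A_i$ uniformly in $h$; the trapezoidal averaging (equivalently, the fact that some fibre node is small on average) is what converts the pointwise trace into the $\Fh$-mass and keeps $C$ bounded as $|h|\to0$, while simultaneously accommodating the half-weights $\gamma_0^2=\gamma_{N_y}^2=\tfrac12$ at the two endpoints of $Y_h$.
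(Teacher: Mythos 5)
Your proof is correct and takes essentially the same route as the paper's: both arguments reduce the claim to a fibrewise bound, convert the endpoint value into the fibre $L^2$-mass by averaging over the starting node with the trapezoidal weights $h_y\gamma_k^2$ (using $h_y\sum_k\gamma_k^2=\ell$), and control the telescoping remainder by the discrete $y$-gradient via Cauchy--Schwarz, so that the powers of $h_y$ cancel and the constant depends only on $\ell$. The only cosmetic difference is that you telescope the squares $u_{ij}^2$ and factor $w_{j+1}^2-w_j^2=(w_{j+1}-w_j)(w_{j+1}+w_j)$, whereas the paper first derives $|u_{i,N_y}|\leq\sum_{j}|u_{i,j+1}-u_{ij}|+\sum_{j}\gamma_j^2h_y|u_{ij}|$ and then squares and applies Cauchy--Schwarz to each term separately.
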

\begin{proof}
  Our proof follows the line of thought  of  \cite{GallouetEtAl2000}. We have that for $u_h\in\Fh$
  \[
    |u_{i,N_y}| \leq \sum_{j=0}^{N_y-1}|u_{i,j+1}-u_{ij}| +
    \sum_{j=0}^{N_y}\gamma_j^2 h_y |u_{ij}|.
  \]
  Squaring both sides of the inequality, we get
  \begin{equation} \label{eq:discrete-trace-1}
    (u_{i,N_y})^2 \leq A_i + B_i,
  \end{equation}
  where
  \begin{equation*}
    A_i \coleq 2\left( \sum_{j=0}^{N_y-1}|u_{i,j+1}-u_{ij}| \right)^2\quad\mbox{ and }
    B_i \coleq 2\left( \sum_{j=0}^{N_y}\gamma_j^2 h_y |u_{ij}| \right)^2.
  \end{equation*}
  Applying the Cauchy-Schwarz inequality to $A_i$, we obtain
  \begin{equation*}
    A_i \leq 2 \sum_{j=0}^{N_y-1} h_y\left(\frac{u_{i,j+1}-u_{ij}}{h_y}\right)^2
    \sum_{j=0}^{N_y-1} h_y = 2\ell \sum_{j=0}^{N_y-1}
    h_y\left(\frac{u_{i,j+1}-u_{ij}}{h_y}\right)^2.
  \end{equation*}
  Similarly, using the Cauchy-Schwarz inequality we get for $B_i$
  \begin{equation*}
    B_i \leq 2 \sum_{j=0}^{N_y}\gamma_j^2 h_y (u_{ij})^2
    \sum_{j=0}^{N_y}\gamma_j^2 h_y = 2\ell\sum_{j=0}^{N_y}\gamma_j^2 h_y
    (u_{ij})^2.
  \end{equation*}
  Multiplying \eqref{eq:discrete-trace-1} by $\gamma_i^1 h_x$, summing over
  $i\in\{0,\dotsc,N_x\}$ and then using the bounds on $A_i$ and $B_i$,
  it yields that:
  \begin{equation*} \label{eq:discrete-trace-2}
    \sum_{i=0}^{N_x}\gamma_i^1h_x(u_{i,N_y})^2
    \leq
    2\ell\Bigg( \sum_{i=0}^{N_x}\sum_{j=0}^{N_y-1} \gamma_i^1 h_x h_y \left(
    (\nabla_{yh} u_h)_{i,j+\frac{1}{2}}\right)^2
    +
    \sum_{x_{ij}\in\omegah} \gamma_i^1\gamma_j^2h_x h_y (u_{ij})^2\Bigg),
  \end{equation*}
  that is
  \begin{equation*} \label{eq:discrete-trace-3}
    \|u_h|_{y=\ell}\|_{\Gh}^2 \leq C \left( \|\nabla_{yh}u_h\|_{\Hh}^2 +
    \|u_h\|_{\Fh}^2 \right),
  \end{equation*}
  from which the claim of the Lemma  follows directly.
\end{proof}

%%%%%%%%%%%%%%%%%%%%%%%%%%%%%%%%%%%%%%%%%%%%%%%%%%%%%%%%%%%%%%%%%%%%%%%%%%%%%%%%
\section{Approximation estimates}\label{estimates}

The aim of this section is to derive {\em a~priori} estimates on the
semi-discrete solution. Based on weak convergence-type arguments, the estimates
will ensure, at least up to subsequences, a (weakly) convergent way to
reconstruct the weak solution to problem (P).

%%%%%%%%%%%%%%%%%%%%%%%%%%%%%%%%%%%%%%%%%%%%%%%%%%%%%%%%%%%%%%%%%%%%%%%%%%%%%%%%
\subsection{{\em A~priori} estimates}

This is the place where we use the tools developed in section
\ref{sec:numerical-scheme}.

In subsequent paragraphs, we refer to the following relations: From
scalar product of \eqref{eq:scheme-1} with $\vph^1\in\Gh$,
\eqref{eq:scheme-2} and \eqref{eq:scheme-3} with $\vph^2\in\Fh$ and
$\vph^3$, respectively, and \eqref{eq:scheme-4} with $\vph^4\in\Gh$
to obtain
\begin{align}
  \label{eq:discrete-weak-1}
  (\dot{u}^1_h,\vph^1)_{\Gho} &= d_1 (\Delta_h u^1_h,\vph^1)_{\Gho} - Bi^M \big( H u^1_h - u^2_h|_{y=0},\vph^1\big)_{\Gho},\\
  \label{eq:discrete-weak-2}
  (\dot{u}^2_h,\vph^2)_{\Fh} &= d_2 (\Delta_{yh} u^2_h,\vph^2)_{\Fh} - \alpha (u^2_h,\vph^2)_{\Fh} + \beta (u^3_h,\vph^2)_{\Fh},\\
  \label{eq:discrete-weak-3}
  (\dot{u}^3_h,\vph^3)_{\Fh} &= d_3 (\Delta_{yh} u^3_h,\vph^3)_{\Fh} + \alpha (u^2_h,\vph^3)_{\Fh} - \beta (u^3_h,\vph^3)_{\Fh},\\
  \label{eq:discrete-weak-4}
  (\dot{u}^4_h,\vph^4)_{\Gh} &= \big(\eta(u^3_h|_{y=\ell},u^4_h), \vph^4\big)_{\Gh}.
\end{align}
Note that $u^1_h$ and $\nabla_h\vph^1$ satisfy the assumptions of Lemma
\ref{lem:sp-props-1}, $u^2_h$ and $\nabla_{yh}\vph^2$ satisfy the assumptions of
Lemma \ref{lem:sp-props-2} with $\delta^1_k=\frac{Bi^M}{d_2}(Hu^1_k-u^2_{k,0})$
and $\delta^2_k=0$, and $u^3_h$ and $\nabla_{yh}\vph^3$ with $\delta^1_k=0$ and
$\delta^2_k=-\frac{1}{d_3}\eta(u^3_{k,N_y},u^4_k)$. Thus, using Lemmas
\ref{lem:sp-props-1}, \ref{lem:sp-props-2} and properties of the discrete scalar
products we get
\begin{align}
  \label{eq:discrete-weak-mod-1}
  (\dot{u}^1_h,\vph^1)_{\Gh} + d_1 (\nabla_h u^1_h,\nabla_h \vph^1)_{\Eh} &= - Bi^M \big(Hu^1_h - u^2_h|_{y=0}, \vph^1\big)_{\Gh},\\
  \label{eq:discrete-weak-mod-2}
  (\dot{u}^2_h,\vph^2)_{\Fh} + d_2 (\nabla_{yh} u^2_h,\nabla_{yh} \vph^2)_{\Hh}
  &= Bi^M (Hu^1_h - u^2_h|_{y=0}, \vph^2|_{y=0})_{\Gh} - \alpha(u^2_h,\vph^2)_{\Fh} + \beta(u^3_h,\vph^2)_{\Fh},\\
  \label{eq:discrete-weak-mod-3}
  (\dot{u}^3_h,\vph^3)_{\Fh} + d_3 (\nabla_{yh}u^3_h, \nabla_{yh}\vph^3)_{\Hh} &=
    - \big(\eta(u^3_h|_{y=\ell},u^4_h),\vph^3|_{y=\ell}\big)_{\Gh}
    + \alpha (u^2_h,\vph^3)_{\Fh} - \beta (u^3_h,\vph^3)_{\Fh},\\
  \label{eq:discrete-weak-mod-4}
  (\dot{u}^4_h,\vph^4)_{\Gh} &= \big(\eta(u^3_h|_{y=\ell},u^4_h), \vph^4\big)_{\Gh}.
\end{align}

\begin{lem}[Discrete energy estimates] \label{lem:discrete-energy-estimate}
  Let $\{u^1_h,u^2_h,u^3_h,u^4_h\}$ be a semi-discrete solution of
  \eqref{eq:model} for some $T>0$. Then it holds that
  \begin{align}
    \max_{t\in S}\Big( \|u^1_h(t)\|_{\Gh}^2 + \|u^2_h(t)\|_{\Fh}^2 +
    \|u^3_h(t)\|_{\Fh}^2 + \|u^4_h(t)\|_{\Gh}^2 \Big)
    & \leq C,
    \label{eq:apriori-1} \\
    \int_0^T\Big( \|\nabla_h u^1_h\|_{\Eh}^2 + \|\nabla_{yh} u^2_h\|_{\Hh}^2 + \|\nabla_{yh} u^3_h\|_{\Hh}^2 \Big) \dt
    & \leq C,
    \label{eq:apriori-2}
  \end{align}
  where $C \coleq \bar{C} \Big( \|u^1_h(0)\|_{\Gh}^2 + \|u^2_h(0)\|_{\Fh}^2 + \|u^3_h(0)\|_{\Fh}^2 + \|u^4_h(0)\|_{\Gh}^2 \Big)$,
  with $\bar{C}$ being a positive constant independent of $h_x$, $h_y$.
\end{lem}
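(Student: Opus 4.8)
The plan is to run a standard discrete energy argument on the reformulated weak identities \eqref{eq:discrete-weak-mod-1}--\eqref{eq:discrete-weak-mod-4}, testing each equation with the semi-discrete solution itself, that is, taking $\vph^1 = u^1_h$, $\vph^2 = u^2_h$, $\vph^3 = u^3_h$ and $\vph^4 = u^4_h$, and using the elementary identity $(\dot{u}_h, u_h) = \tfrac12\,\frac{\mrm{d}}{\mrm{d}t}\|u_h\|^2$ in each of the four discrete scalar products. Summing the resulting identities, the left-hand side produces $\tfrac12\frac{\mrm{d}}{\mrm{d}t}\Phi(t)$, where $\Phi(t)\coleq\|u^1_h(t)\|_{\Gh}^2 + \|u^2_h(t)\|_{\Fh}^2 + \|u^3_h(t)\|_{\Fh}^2 + \|u^4_h(t)\|_{\Gh}^2$, together with the nonnegative dissipation $d_1\|\nabla_h u^1_h\|_{\Eh}^2 + d_2\|\nabla_{yh} u^2_h\|_{\Hh}^2 + d_3\|\nabla_{yh} u^3_h\|_{\Hh}^2$, which I keep on the left. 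It then remains to bound the right-hand side by $C\,\Phi(t)$ plus a controllably small fraction of the dissipation.

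The volume reaction contributions coming from $\alpha,\beta\in L^\infty_+(Y)$ are harmless: by Cauchy--Schwarz and Young's inequality they are dominated by multiples of $\|u^2_h\|_{\Fh}^2$ and $\|u^3_h\|_{\Fh}^2$ (the diagonal term $-\alpha\|u^2_h\|_{\Fh}^2$ even carries a favourable sign). The delicate contributions are the micro--macro transmission (Henry) terms $\pm Bi^M\big(Hu^1_h - u^2_h|_{y=0},\,\cdot\,\big)_{\Gh}$ and the surface reaction terms $\pm\big(\eta(u^3_h|_{y=\ell}, u^4_h),\,\cdot\,\big)_{\Gh}$, since they are expressed through the boundary traces $u^2_h|_{y=0}$ and $u^3_h|_{y=\ell}$, which are \emph{not} controlled by the $\Fh$-norms alone. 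Here I would invoke the discrete trace inequality (Lemma \ref{lem:discrete-trace}) for $u^3_h|_{y=\ell}$ and its mirror analogue at $y=0$ (obtained by carrying out the identical summation started from the opposite end of $Y_h$) for $u^2_h|_{y=0}$, both of the form $\|u_h|_{y=\cdot}\|_{\Gh}^2 \le C\big(\|u_h\|_{\Fh}^2 + \|\nabla_{yh} u_h\|_{\Hh}^2\big)$ with $C$ independent of $h$. For the $\eta$ terms I would first use assumption (A3) --- the sublinearity of $R$ and the boundedness of $Q$ --- to obtain $|\eta(u^3_h|_{y=\ell}, u^4_h)| \le C\,|u^3_h|_{y=\ell}|$, reducing everything to trace norms, and then close by Cauchy--Schwarz and Young.

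After inserting the trace inequalities, each troublesome term is dominated by $\varepsilon\big(\|\nabla_{yh} u^2_h\|_{\Hh}^2 + \|\nabla_{yh} u^3_h\|_{\Hh}^2\big) + C_\varepsilon\,\Phi(t)$, with $\varepsilon$ at our disposal. Choosing $\varepsilon$ small relative to $\min\{d_2,d_3\}$ lets me move these micro-gradient contributions back to the left and absorb them into the dissipative terms, leaving a strictly positive fraction $c>0$ of the dissipation. This yields the differential inequality
\[
  \frac{\mrm{d}}{\mrm{d}t}\Phi(t) + c\Big( \|\nabla_h u^1_h\|_{\Eh}^2 + \|\nabla_{yh} u^2_h\|_{\Hh}^2 + \|\nabla_{yh} u^3_h\|_{\Hh}^2 \Big) \le C\,\Phi(t).
\]
Discarding the nonnegative dissipation and applying Gronwall's lemma gives $\Phi(t)\le e^{CT}\Phi(0)$ for all $t\in S$, which is \eqref{eq:apriori-1}; reinstating the dissipation and integrating the inequality over $[0,T]$ (using the just-proved bound on $\Phi$) yields \eqref{eq:apriori-2}, with $C$ as in the statement.

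The main obstacle is precisely this absorption step for the boundary-trace terms: one must verify that the constant furnished by the discrete trace inequality (and those coming from the $L^\infty$ bounds on $\alpha,\beta$ and from the structural assumptions (A3) on $\eta$) are \emph{independent of $h_x,h_y$}, so that $\varepsilon$ can be fixed uniformly in $h$ and the dissipation is genuinely absorbed rather than only balanced at a mesh-dependent rate. This $h$-uniformity, guaranteed by Lemma \ref{lem:discrete-trace}, is exactly what makes the final constant $\bar C$ independent of the discretization parameters.
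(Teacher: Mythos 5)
Your overall strategy --- testing \eqref{eq:discrete-weak-mod-1}--\eqref{eq:discrete-weak-mod-4} with the solution itself, summing, handling the volume reaction terms by Young's inequality, controlling boundary traces via the discrete trace inequality, absorbing into the dissipation, and closing with Gronwall --- is exactly the paper's strategy. Your treatment of the Henry terms (a mirror trace inequality at $y=0$ plus absorption of an $\varepsilon$-fraction into the $d_2$-dissipation) is a valid, if slightly roundabout, alternative to the paper's purely algebraic argument: the paper expands the two Henry contributions into $-2H\|u^1_h\|_{\Gh}^2 + 2(1+H)(u^1_h,u^2_h|_{y=0})_{\Gh} - 2\|u^2_h|_{y=0}\|_{\Gh}^2$ and lets the negative square $-2\|u^2_h|_{y=0}\|_{\Gh}^2$ absorb the cross term via Young, so no trace inequality and no encroachment on the $d_2$-dissipation is needed there.

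There is, however, a genuine gap in your handling of the surface reaction term $\big(\eta(u^3_h|_{y=\ell},u^4_h),\,u^4_h-u^3_h|_{y=\ell}\big)_{\Gh}$. If you replace $\eta$ by the crude bound $|\eta|\leq k\bar c\,|u^3_h|_{y=\ell}|$ ``everywhere'' and then apply Cauchy--Schwarz and Young, the diagonal piece $-\big(\eta,u^3_h|_{y=\ell}\big)_{\Gh}$ becomes $+k\bar c\,\|u^3_h|_{y=\ell}\|_{\Gh}^2$ with a \emph{fixed} constant: Young's inequality offers no free parameter when both factors are the same trace norm. Feeding this into the additive trace inequality of Lemma~\ref{lem:discrete-trace} produces a contribution $k\bar c\,C_{\mathrm{tr}}\|\nabla_{yh}u^3_h\|_{\Hh}^2$ whose coefficient is not at your disposal and hence cannot in general be absorbed into $d_3\|\nabla_{yh}u^3_h\|_{\Hh}^2$; your claim that ``each troublesome term is dominated by $\varepsilon(\cdots)+C_\varepsilon\Phi$'' fails precisely here. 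The paper avoids this by keeping the sign of that piece: since $\eta=kR(r)Q(s)\geq 0$ on $r,s\geq 0$ and $\eta$ vanishes otherwise, one has $\eta(u^3_{i,N_y},u^4_i)\,u^3_{i,N_y}\geq 0$ pointwise, so $-\big(\eta,u^3_h|_{y=\ell}\big)_{\Gh}\leq 0$ and is simply discarded; only the cross term $\big(\eta,u^4_h\big)_{\Gh}$ survives, and there Young's $\varepsilon$ is genuinely free, so the trace inequality contributes a gradient term of size $\varepsilon$ that is absorbable. (A multiplicative, $\varepsilon$-weighted trace inequality would also rescue your version, but that is not what Lemma~\ref{lem:discrete-trace} provides.) With that one correction your argument coincides with the paper's.
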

\begin{proof}
  In \eqref{eq:discrete-weak-mod-1}--\eqref{eq:discrete-weak-mod-4}, taking
  $(\vph^1,\vph^2,\vph^3,\vph^4)=(u^1_h,u^2_h,u^3_h,u^4_h)$,
  % \begin{align*}
    % \frac{1}{2}\deriv{}{t}\|u^1_h\|_{\Gh}^2 + d_1 \|\nabla_h u^1_h\|_{\Eh}^2 &= - Bi^M ( H u^1_h - u^2_h|_{y=0},u^1_h)_{\Gh},\\
    % \frac{1}{2}\deriv{}{t}\|u^2_h\|_{\Fh}^2 + d_2 \|\nabla_{yh} u^2_h\|_{\Hh}^2 &= Bi^M ( H u^1_h - u^2_h|_{y=0}, u^2_h|_{y=0} )_{\Gh} - \alpha \|u^2_h\|_{\Fh}^2 \nonumber\\
    % &\qquad\qquad + \beta (u^3_h,u^2_h)_{\Fh},\\
    % \frac{1}{2}\deriv{}{t}\|u^3_h\|_{\Fh}^2 + d_3 \|\nabla_{yh} u^3_h\|_{\Hh}^2
    % &= - \big(\eta(u^3_h|_{y=\ell},u^4_h),u^3_h|_{y=\ell}\big)_{\Gh} + \alpha (u^2_h,u^3_h)_{\Fh} \nonumber\\
    % &\qquad\qquad - \beta \|u^3_h\|_{\Fh}^2,\\
    % \frac{1}{2}\deriv{}{t}\|u^4_h\|_{\Gh}^2 &= \big(\eta(u^3_h|_{y=\ell},u^4_h), u^4_h\big)_{\Gh}.
  % \end{align*}
  summing the equalities, applying Young's inequality on terms with
  $(u^2_h,u^3_h)_{\Fh}$, dropping the negative terms on the right-hand side, and
  multiplying the resulting inequality by $2$ give
  \begin{multline*}
    \deriv{}{t} \Big( \|u^1_h\|_{\Gh}^2 + \|u^2_h\|_{\Fh}^2 +
    \|u^3_h\|_{\Fh}^2 + \|u^4_h\|_{\Gh}^2 \Big)
    + 2d_1 \|\nabla_h u^1_h\|_{\Eh}^2
    + 2d_2 \|\nabla_{yh} u^2_h\|_{\Hh}^2
    + 2d_3 \|\nabla_{yh} u^3_h\|_{\Hh}^2 \\
    \leq
    - 2Bi^M ( H u^1_h - u^2_h|_{y=0}, u^1_h )_{\Gh}
    + 2Bi^M ( H u^1_h - u^2_h|_{y=0}, u^2_h|_{y=0} )_{\Gh} \\
    + C_1\|u^2_h\|_{\Fh}^2
    + C_1\|u^3_h\|_{\Fh}^2
    + 2\big(\eta(u^3_h|_{y=\ell},u^4_h), u^4_h - u^3_h|_{y=\ell} \big)_{\Gh},
  \end{multline*}
  where $C_1\coleq \alpha+\beta>0$. Expanding the first two terms on the
  right-hand side we get
  \begin{multline*}
    - 2( H u^1_h - u^2_h|_{y=0}, u^1_h )_{\Gh} + 2( H u^1_h -
    u^2_h|_{y=0}, u^2_h|_{y=0} )_{\Gh} = - 2H \|u^1_h\|_{\Gh}^2 \\ 
    +2(1+H)(u^1_h,u^2_h|_{y=0})_{\Gh} - 2\|u^2_h|_{y=0}\|_{\Gh}^2 \leq
    \frac{1+H}{\varepsilon}\|u^1_h\|_{\Gh}^2 +
    \big((1+H)\varepsilon - 2\big) \|u^2_h|_{y=0}\|_{\Gh}^2,
  \end{multline*}
  where we used Young's inequality with $\varepsilon>0$. Choosing $\varepsilon$
  sufficiently small, the coefficient in front of the last term is negative, so
  we have that
  \[
    - 2(Hu^1_h-u^2_h|_{y=0},u^1_h)_{\Gh} + 2(Hu^1_h-u^2_h|_{y=0},u^2_h|_{y=0})_{\Gh}
    \leq C_2\|u^1_h\|_{\Gh}^2,
  \]
  where $C_2\coleq \frac{1+H}{\varepsilon}>0$, and thus
  \begin{multline}\label{eq:apriori-proof-1}
    \deriv{}{t} \Big( \|u^1_h\|_{\Gh}^2 + \|u^2_h\|_{\Fh}^2 +
    \|u^3_h\|_{\Fh}^2 + \|u^4_h\|_{\Gh}^2 \Big)
    + 2d_1 \|\nabla_h u^1_h\|_{\Eh}^2
    + 2d_2 \|\nabla_{yh} u^2_h\|_{\Hh}^2
    + 2d_3 \|\nabla_{yh} u^3_h\|_{\Hh}^2 \\
    \leq
    C_2\|u^1_h\|_{\Gh}^2
    + C_1\|u^2_h\|_{\Fh}^2
    + C_1\|u^3_h\|_{\Fh}^2
    + 2\big(\eta(u^3_h|_{y=\ell},u^4_h), u^4_h - u^3_h|_{y=\ell} \big)_{\Gh}.
  \end{multline}
  For the last term on the right-hand side of the previous inequality we have
  \begin{multline*}
    2\big(\eta(u^3_h|_{y=\ell},u^4_h), u^4_h - u^3_h|_{y=\ell} \big)_{\Gh} =
    2k\big(R(u^3_h|_{y=\ell})Q(u^4_h), u^4_h\big)_{\Gh}
    \underbrace{- 2k\big(R(u^3_h|_{y=\ell})Q(u^4_h),u^3_h|_{y=\ell}
    \big)_{\Gh}}_{\leq 0}\\
    \leq 2k\bar{c}^q\big(R(u^3_h|_{y=\ell}), u^4_h\big)_{\Gh}
    \leq k\bar{c}^q\Big( \varepsilon\|R(u^3_h|_{y=\ell})\|_{\Gh}^2 + \frac{1}{\varepsilon}\|u^4_h\|_{\Gh}^2 \Big)
    \leq k\bar{c}^q\Big( \varepsilon\|u^3_h|_{y=\ell}\|_{\Gh}^2 + \frac{1}{\varepsilon}\|u^4_h\|_{\Gh}^2 \Big) \\
    \leq k\bar{c}^q\Big(
    C_3\varepsilon\|u_h^3\|_{\Fh}^2 + C_3\varepsilon\|\nabla_{yh}u_h^3\|_{\Hh}^2
    + \frac{1}{\varepsilon}\|u^4_h\|_{\Gh}^2 \Big),
  \end{multline*}
  where we used the assumption (A$_1$), Young's inequality with $\varepsilon>0$
  and the discrete trace inequality \eqref{eq:discrete-trace} with the constant
  $C_3>0$. Using the result in \eqref{eq:apriori-proof-1} we obtain
  \begin{multline}\label{eq:apriori-proof-2}
    \deriv{}{t} \Big( \|u^1_h\|_{\Gh}^2 + \|u^2_h\|_{\Fh}^2 +
    \|u^3_h\|_{\Fh}^2 + \|u^4_h\|_{\Gh}^2 \Big)
    + d_1 \|\nabla_h u^1_h\|_{\Eh}^2
    + d_2 \|\nabla_{yh} u^2_h\|_{\Hh}^2
    + C_4 \|\nabla_{yh} u^3_h\|_{\Hh}^2 \\
    \leq
    C_2\|u^1_h\|_{\Gh}^2 + C_1\|u^2_h\|_{\Fh}^2 + C_5\|u_h^3\|_{\Fh}^2 + C_6\|u^4_h\|_{\Gh}^2,
  \end{multline}
  where
  $C_4\coleq d_3 - k\bar{c}^qC_3\varepsilon$ can be made positive for
  $\varepsilon$ sufficiently small, $C_5\coleq C_1 +
  k\bar{c}^qC_3\varepsilon$, and $C_6\coleq
  k\bar{c}^q\frac{1}{\varepsilon}$.

  Discarding the terms with discrete gradient, we get
  \begin{equation*}
    \deriv{}{t}
    \Big(  \|u^1_h\|_{\Gh}^2 + \|u^2_h\|_{\Fh}^2 + \|u^3_h\|_{\Fh}^2 + \|u^4_h\|_{\Gh}^2 \Big)
    \leq C_7\Big( \|u^1_h\|_{\Gh}^2 + \|u^2_h\|_{\Fh}^2
    + \|u^3_h\|_{\Fh}^2 + \|u^4_h\|_{\Gh}^2 \Big),
  \end{equation*}
  where $C_7\coleq \max\{C_1,C_2,C_5,C_6\}$. Applying the Gronwall's lemma to
  the previous inequality we obtain
  \begin{multline}\label{eq:apriori-proof-3}
    \max_{t\in S}\Big( \|u^1_h(t)\|_{\Gh}^2 + \|u^2_h(t)\|_{\Fh}^2 + \|u^3_h(t)\|_{\Fh}^2 +
    \|u^4_h(t)\|_{\Gh}^2 \Big) \\
    \leq
    \Big( \|u^1_h(0)\|_{\Gh}^2 + \|u^2_h(0)\|_{\Fh}^2 + \|u^3_h(0)\|_{\Fh}^2 +
    \|u^4_h(0)\|_{\Gh}^2 \Big) e^{C_7T}.
  \end{multline}
  Finally, integrating \eqref{eq:apriori-proof-2} over $[0,T]$ and using
  \eqref{eq:apriori-proof-3} gives
  \begin{multline}\label{eq:apriori-proof-4}
    \int_0^T\Big( \|\nabla_h u^1_h\|_{\Eh}^2 + \|\nabla_{yh} u^2_h\|_{\Hh}^2 + \|\nabla_{yh} u^3_h\|_{\Hh}^2 \Big) \dt \\
    \leq
    \frac{1 + C_7 T e^{C_7T}}{C_8} \Big( \|u^1_h(0)\|_{\Gh}^2 +
    \|u^2_h(0)\|_{\Fh}^2 + \|u^3_h(0)\|_{\Fh}^2 + \|u^4_h(0)\|_{\Gh}^2 \Big),
  \end{multline}
  where $C_8 \coleq \min\{d_1,d_2,C_4\}$. The claim of the lemma directly
  follows.
\end{proof}

\begin{lem} \label{lem:time-derivative-estimate}
  Let $\{u^1_h,u^2_h,u^3_h,u^4_h\}$ be a semi-discrete solution of
  \eqref{eq:model} for some $T>0$. Then it holds that
  \begin{align}
    \label{eq:apriori-timeder-1}
    \max_{t\in S}\Big( \|\dot{u}^1_h(t)\|_{\Gh}^2 + \|\dot{u}^2_h(t)\|_{\Fh}^2 +
    \|\dot{u}^3_h(t)\|_{\Fh}^2 \Big) & \leq C,\\
    \label{eq:apriori-timeder-2}
    \int_0^T\Big(\|\nabla_h\dot{u}^1_h\|_{\Eh}^2 + \|\nabla_{yh}\dot{u}^2_h\|_{\Hh}^2 + \|\nabla_{yh}\dot{u}^3_h\|_{\Hh}^2\Big) \dt
    & \leq C,
  \end{align}
  where $C$ is a positive constant independent of $h_x$, $h_y$.
\end{lem}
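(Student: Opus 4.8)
The plan is to differentiate the semi-discrete scheme \eqref{eq:scheme-1}--\eqref{eq:scheme-4} together with the discrete boundary conditions \eqref{eq:scheme-bc} with respect to $t$, and then to repeat for the triple $(\dot u^1_h,\dot u^2_h,\dot u^3_h)$ the energy argument already carried out in the proof of Lemma~\ref{lem:discrete-energy-estimate}. Since $\zeta(r,s)=\alpha r-\beta s$ is linear, its time derivative reproduces the very same linear coupling, so the only genuinely new feature is that the nonlinear $\eta$-terms linearize into $\partial_r\eta\,\dot u^3_h|_{y=\ell}+\partial_s\eta\,\dot u^4_h$, with $\partial_r\eta,\partial_s\eta$ bounded by the global Lipschitz constant of $\eta$ from (A3). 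Differentiating the Dirichlet condition \eqref{eq:scheme-bc-1} gives $\dot u^1_0=0$ (recall $u_1^D$ is constant), the Neumann conditions are preserved, and the Robin-type condition at $y=0$ becomes $-d_2\tfrac12((\nabla_{yh}\dot u^2_h)_{i,-1/2}+(\nabla_{yh}\dot u^2_h)_{i,1/2})=Bi^M(H\dot u^1_i-\dot u^2_{i,0})$, so the differentiated functions still satisfy the hypotheses of Lemmas~\ref{lem:sp-props-1} and~\ref{lem:sp-props-2}. Because $\eta$ is only Lipschitz, the cleanest rigorous route is to work first with the time-difference quotients $(u^k_h(\cdot+s)-u^k_h(\cdot))/s$, derive the estimates uniformly in $s$ using the Lipschitz bounds, and pass to the limit $s\to0$.

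Having obtained the differentiated weak identities (the exact analogues of \eqref{eq:discrete-weak-mod-1}--\eqref{eq:discrete-weak-mod-3} with each $u^k_h$ replaced by $\dot u^k_h$ and $\eta$ by its linearization), I would test them with $(\dot u^1_h,\dot u^2_h,\dot u^3_h)$, invoke the discrete Green formulae, and reproduce verbatim the Young- and trace-inequality manipulations of the previous proof. The term $-(\partial_r\eta\,\dot u^3_h|_{y=\ell}+\partial_s\eta\,\dot u^4_h,\dot u^3_h|_{y=\ell})_{\Gh}$ is treated exactly as the $\eta$-term there: the discrete trace inequality \eqref{eq:discrete-trace} applied to $\dot u^3_h$ lets one absorb a small multiple of $\|\nabla_{yh}\dot u^3_h\|_{\Hh}^2$ into the diffusion term on the left, leaving $\|\dot u^3_h\|_{\Fh}^2$ and $\|\dot u^4_h\|_{\Gh}^2$ on the right. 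This produces a differential inequality
$$\deriv{}{t}\tilde E + c_0\Big(\|\nabla_h\dot u^1_h\|_{\Eh}^2+\|\nabla_{yh}\dot u^2_h\|_{\Hh}^2+\|\nabla_{yh}\dot u^3_h\|_{\Hh}^2\Big)\leq C\tilde E + C\|\dot u^4_h\|_{\Gh}^2,$$
where $\tilde E\coleq\|\dot u^1_h\|_{\Gh}^2+\|\dot u^2_h\|_{\Fh}^2+\|\dot u^3_h\|_{\Fh}^2$. Crucially, $\dot u^4_h$ is \emph{not} estimated through its own differential inequality but controlled directly: from \eqref{eq:scheme-4}, the sublinearity of $R$ and the bound $Q\leq\bar c$ give $\|\dot u^4_h\|_{\Gh}\leq k\bar c\,\|u^3_h|_{y=\ell}\|_{\Gh}$, so \eqref{eq:discrete-trace} together with \eqref{eq:apriori-1}--\eqref{eq:apriori-2} of Lemma~\ref{lem:discrete-energy-estimate} yields $\int_0^T\|\dot u^4_h\|_{\Gh}^2\,\dt\leq C$.

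It then remains to close the argument by Gronwall's lemma, which gives $\max_{t\in S}\tilde E(t)\le(\tilde E(0)+C\int_0^T\|\dot u^4_h\|_{\Gh}^2\,\dt)e^{CT}$, hence \eqref{eq:apriori-timeder-1}; integrating the differential inequality over $[0,T]$ and using this bound then delivers \eqref{eq:apriori-timeder-2}, exactly as in the derivation of \eqref{eq:apriori-2}. The main obstacle---and the only point where this estimate genuinely asks for more than the previous one---is the uniform control of the initial values $\dot u^k_h(0)$, obtained by evaluating the scheme at $t=0$. For the macro variable, $\dot u^1_h(0)=d_1\Delta_h\PhO u_1^0-Bi^M(H(\PhO u_1^0+u_1^D)-\Pho u_2^0|_{y=0})$, and $\|\Delta_h\PhO u_1^0\|_{\Gho}$ is bounded independently of $h_x$ precisely because $u_1^0\in H^2(\Omega)$ by (A4).

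The delicate part is $\dot u^2_h(0)$ and $\dot u^3_h(0)$, whose dominant contributions $d_2\Delta_{yh}\Pho u_2^0$ and $d_3\Delta_{yh}\Pho u_3^0$ are discrete second-order $y$-differences. A uniform bound $\|\Delta_{yh}\Pho u_i^0\|_{\Fh}\le C$ does \emph{not} follow from the $L^2(\Omega;H^1(Y))$ regularity stated in (A4): the discrete Laplacian of a merely $H^1$-in-$y$ profile can grow like $h_y^{-1}$. Closing this step therefore requires either $H^2$-regularity of $u_2^0,u_3^0$ in the $y$-variable or a projection $\Pho$ engineered to keep the discrete micro-Laplacian of the initial data bounded uniformly in $h$. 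I would single out this initial-layer estimate as the crux of the proof and make the corresponding regularity (or projection) hypothesis explicit at exactly this point, the rest of the argument being a faithful repetition of Lemma~\ref{lem:discrete-energy-estimate}.
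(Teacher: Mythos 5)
Your proposal follows the same route as the paper: differentiate the discrete weak identities \eqref{eq:discrete-weak-mod-1}--\eqref{eq:discrete-weak-mod-3} in time, test with $(\dot u^1_h,\dot u^2_h,\dot u^3_h)$, rerun the energy argument of Lemma~\ref{lem:discrete-energy-estimate}, apply Gronwall, and then bound the initial values $\dot u^k_h(0)$ by evaluating the scheme at $t=0$. Two of your refinements go slightly beyond what the paper writes down. First, the paper leaves $C_3\|\dot u^4_h\|^2_{\Gh}$ on the right-hand side of its differential inequality and then states a Gronwall bound purely in terms of $\|\dot u^k_h(0)\|$, $k=1,2,3$; your explicit observation that \eqref{eq:scheme-4} together with $R(r)\le r$, $Q\le\bar c$, the trace inequality \eqref{eq:discrete-trace} and Lemma~\ref{lem:discrete-energy-estimate} gives $\int_0^T\|\dot u^4_h\|^2_{\Gh}\,\dt\le C$ is exactly what is needed to make that step legitimate, and the paper leaves it implicit. (The paper instead imposes the structural restriction $\partial_r\eta>0$ to discard one term; your difference-quotient device for the merely Lipschitz $\eta$ is also a cleaner justification of the formal time differentiation.) Second, and more importantly, the ``crux'' you single out is real: the paper's treatment of the initial layer consists of testing the $t=0$ identities with $\dot u^k_h(0)$ and invoking ``Schwarz's inequality and Young's inequality together with the regularity of the initial data,'' which requires a bound on $\|\Delta_{yh}\Pho u_i^0\|_{\Fh}$, $i=2,3$, uniform in $h_y$. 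Assumption (A4) only provides $u_2^0,u_3^0\in L^2(\Omega;H^1(Y))$, so, exactly as you say, this step needs either $H^2$-in-$y$ regularity of the micro initial data or a projection $\Pho$ designed so that the discrete micro-Laplacian of the projected data stays bounded; the paper does not make this explicit. So your proof is essentially the paper's proof, with the one genuine gap you flag being a gap in the paper's own argument rather than in yours.
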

\begin{proof}
  We follow the steps of \cite[Theorem~4]{MunteanNeuss2010}. Differentiate
  \eqref{eq:discrete-weak-mod-1}--\eqref{eq:discrete-weak-mod-3} with respect to
  time, take $\vph^i=\dot{u}^i_h,i=1,\dotsc,3$, discard the negative terms on
  the right-hand side and sum the inequalities to obtain
  \begin{multline*}
      \frac{1}{2}\deriv{}{t}\Big(
      \|\dot{u}^1_h\|^2_{\Gh} +
      \|\dot{u}^2_h\|^2_{\Fh} +
      \|\dot{u}^3_h\|^2_{\Fh}
      \Big)
      + d_1\|\nabla_h\dot{u}^1_h\|^2_{\Eh}
      + d_2\|\nabla_{yh}\dot{u}^2_h\|^2_{\Hh}
      + d_3\|\nabla_{yh}\dot{u}^3_h\|^2_{\Hh} \\
      \leq
      Bi^M (1 + H) \big(\dot{u}^1_h, \dot{u}^2_h|_{y=0}\big)_{\Gh} - Bi^M\|\dot{u}^2_h|_{y=0}\|^2_{\Gh}
      + (\alpha+\beta)\big(\dot{u}^2_h,\dot{u}^3_h\big)_{\Fh} \\
      - \big(\partial_r\eta(u^3_h|_{y=\ell},u^4_h)\dot{u}^3_h|_{y=\ell} +
      \partial_s\eta(u^3_h|_{y=\ell},u^4_h)\dot{u}^4_h, \dot{u}_h^3|_{y=\ell} \big)_{\Gh}.
  \end{multline*}
  As in the proof of Lemma \ref{lem:discrete-energy-estimate}, for the first
  two terms on the right-hand side we have that
  \begin{equation*}
      Bi^M (1 + H) \big(\dot{u}^1_h, \dot{u}^2_h|_{y=0}\big)_{\Gh} -
      Bi^M\|\dot{u}^2_h|_{y=0}\|^2_{\Gh} \leq C_1 \|\dot{u}^1_h\|^2_{\Gh},
  \end{equation*}
  and for the third term
  \begin{equation*}
      (\alpha+\beta)\big(\dot{u}^2_h,\dot{u}^3_h\big)_{\Fh} \leq
      C_2 \big( \|\dot{u}^2_h\|^2_{\Fh} + \|\dot{u}^3_h\|^2_{\Fh} \big).
  \end{equation*}
  Using the Lipschitz property of $\eta$, together with Schwarz's  and Young's
  inequalities, and assuming the structural restriction $\partial_r\eta > 0$, we
  obtain for the last term on the right-hand side that
  \begin{multline*}
      - \big(\partial_r\eta\dot{u}^3_h|_{y=\ell} + \partial_s\eta\dot{u}^4_h,
      \dot{u}_h^3|_{y=\ell} \big)_{\Gh} =
      -\big(\partial_r\eta\dot{u}^3_h|_{y=\ell},\dot{u}_h^3|_{y=\ell}\big)_{\Gh}
      - \big(\partial_s\eta\dot{u}^4_h, \dot{u}_h^3|_{y=\ell} \big)_{\Gh} \\
      \leq
      \underbrace{-\big(\partial_r\eta\dot{u}^3_h|_{y=\ell},\dot{u}_h^3|_{y=\ell}\big)_{\Gh}}_{\leq 0}
      + C \left(\frac{1}{2\varepsilon} \big\|\dot{u}^4_h\big\|^2_{\Gh} +
      \frac{\varepsilon}{2}\|\dot{u}_h^3|_{y=\ell} \big\|^2_{\Gh}\right).
  \end{multline*}
  Choosing $\varepsilon$ sufficiently small, we get that
  \begin{equation*}
      - \big(\partial_r\eta(u^3_h|_{y=\ell},u^4_h)\dot{u}^3_h|_{y=\ell} +
      \partial_s\eta(u^3_h|_{y=\ell},u^4_h)\dot{u}^4_h, \dot{u}_h^3|_{y=\ell}
      \big)_{\Gh} \leq C_3\big\|\dot{u}^4_h\big\|^2_{\Gh}.
  \end{equation*}
  Putting the obtained results together we finally obtain that
  \begin{multline}\label{eq:time-derivative-estimate-1}
      \frac{1}{2}\deriv{}{t}\Big(
      \|\dot{u}^1_h\|^2_{\Gh} +
      \|\dot{u}^2_h\|^2_{\Fh} +
      \|\dot{u}^3_h\|^2_{\Fh}
      \Big)
      + d_1\|\nabla_h\dot{u}^1_h\|^2_{\Eh}
      + d_2\|\nabla_{yh}\dot{u}^2_h\|^2_{\Hh}
      + d_3\|\nabla_{yh}\dot{u}^3_h\|^2_{\Hh} \\
      \leq
      C_1 \|\dot{u}^1_h\|^2_{\Gh}
      + C_2 \big(\|\dot{u}^2_h\|^2_{\Fh} + \|\dot{u}^3_h\|^2_{\Fh} \big)
      + C_3 \|\dot{u}^4_h\|^2_{\Gh}.
  \end{multline}
  Gr\"onwall's inequality gives that
  \begin{equation}\label{eq:time-derivative-estimate-2}
    \max_{t\in S}\Big( \|\dot{u}^1_h\|^2_{\Gh} + \|\dot{u}^2_h\|^2_{\Fh} +
    \|\dot{u}^3_h\|^2_{\Fh} \Big) \leq C_4 \Big( \|\dot{u}^1_h(0)\|^2_{\Gh} + \|\dot{u}^2_h(0)\|^2_{\Fh} +
    \|\dot{u}^3_h(0)\|^2_{\Fh} \Big).
  \end{equation}
  In order to estimate the right-hand side in the previous inequality, we evaluate
  \eqref{eq:discrete-weak-1}--\eqref{eq:discrete-weak-3} at $t=0$ and test with
  $\big(\dot{u}^1_h(0),\dot{u}^2_h(0),\dot{u}^3_h(0)\big)$ to get
  \begin{multline*}
  \|\dot{u}^1_h(0)\|^2_{\Gh} + \|\dot{u}^2_h(0)\|^2_{\Fh} + \|\dot{u}^3_h(0)\|^2_{\Fh}
    = d_1 (\Delta_h u^1_h(0),\dot{u}^1_h(0))_{\Gho} + d_2 (\Delta_{yh}
    u^2_h(0),\dot{u}^2_h(0))_{\Fh} \\ + d_3 (\Delta_{yh} u^3_h(0),\dot{u}^3_h(0))_{\Fh}
    - Bi^M \big( H u^1_h(0) - u^2_h(0)|_{y=0},\dot{u}^1_h(0)\big)_{\Gh} \\
    + (\alpha u^2_h(0) - \beta u^3_h(0),\dot{u}^3_h(0)-\dot{u}^2_h(0))_{\Fh}.
  \end{multline*}
  Schwarz's inequality and Young's inequality (with $\varepsilon>0$ chosen
  sufficiently small) together with the regularity of the initial data yield the
  estimate
  \begin{equation*}
    \|\dot{u}^1_h(0)\|^2_{\Gh} + \|\dot{u}^2_h(0)\|^2_{\Fh} +
    \|\dot{u}^3_h(0)\|^2_{\Fh} \leq C,
  \end{equation*}
  where $C$ does not depend on the spatial step sizes. Returning back to
  \eqref{eq:time-derivative-estimate-1}, integrating it with respect to $t$ and
  using \eqref{eq:time-derivative-estimate-2} gives the claim of the lemma.
\end{proof}

In the following lemma we derive additional {\em a~priori} estimates that will
finally allow us to pass in the limit in the non-linear terms. In order to avoid
introducing new grids, grid functions and associated scalar products for finite
differences in $x$ variable, we will resort to sum notation in this proof. To
this end, for $u_h\in\Fh$, let $\delta^+_x u_{ij}$, $\delta^-_x u_{ij}$,
$\delta^+_y u_{ij}$, $\delta^-_y u_{ij}$ denote the forward and backward
difference quotients at $x_{ij}$ in $x$- and $y$-direction, i.e.,
\begin{gather*}
  (\delta^+_x u_h)_{ij}\coleq\frac{u_{i+1,j}-u_{ij}}{h_x},\quad
  (\delta^-_x u_h)_{ij}\coleq\frac{u_{ij}-u_{i-1,j}}{h_x},\\
  (\delta^+_y u_h)_{ij}\coleq\frac{u_{i,j+1}-u_{ij}}{h_y},\quad
  (\delta^-_y u_h)_{ij}\coleq\frac{u_{ij}-u_{i,j-1}}{h_y}.
\end{gather*}
\begin{lem}[Improved {\em a~priori} estimates]\label{lem:improved-apriori}
  Let $\{u^1_h,u^2_h,u^3_h,u^4_h\}$ be a semi-discrete solution of
  \eqref{eq:model} for some $T>0$. Then it holds that
  \begin{align}
    \max_{t\in S}\Big(
    h_xh_y\sum_{i=0}^{N_x-1}\sum_{j=0}^{N_y}(\delta_x^+u^2_{ij})^2
    +
    h_xh_y\sum_{i=0}^{N_x-1}\sum_{j=0}^{N_y}(\delta_x^+u^3_{ij})^2
    \Big) &\leq C,
    \\
    \int_0^T h_xh_y\sum_{i=0}^{N_x-1}\sum_{j=0}^{N_y-1}(\delta^+_x\delta^+_yu^2_{ij})^2 \,\dt
    +
    \int_0^T h_xh_y\sum_{i=0}^{N_x-1}\sum_{j=0}^{N_y-1}(\delta_x^+\delta_y^+u^3_{ij})^2 \,\dt
    &\leq C,
  \end{align}
  where $C$ is a positive constant independent of $h_x$, $h_y$.
\end{lem}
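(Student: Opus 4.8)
The plan is to use the discrete analogue of Nirenberg's difference-quotient method in the $x$-direction. Since the diffusion in \eqref{eq:scheme-2}--\eqref{eq:scheme-3} acts only in $y$, the operators $\Delta_{yh}$ and $\zeta$ are linear, and the coefficients $\alpha,\beta$ and the datum $u_1^D$ do not depend on $x$, applying $\delta^+_x$ to \eqref{eq:scheme-2}--\eqref{eq:scheme-4} shows that $v^2\coleq\delta^+_x u^2_h$, $v^3\coleq\delta^+_x u^3_h$ and $w\coleq\delta^+_x u^4_h$ solve the \emph{same} semi-discrete system (now for $i=0,\dots,N_x-1$): $\dot v^2 = d_2\Delta_{yh}v^2-\zeta(v^2,v^3)$, $\dot v^3 = d_3\Delta_{yh}v^3+\zeta(v^2,v^3)$ and $\dot w = \delta^+_x\eta(u^3_h|_{y=\ell},u^4_h)$. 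Differencing the ghost-node relations \eqref{eq:scheme-bcs-3} and \eqref{eq:scheme-bcs-6} in $x$ supplies the boundary fluxes: $v^2$ carries $\delta^1_i=\frac{Bi^M}{d_2}\big(H\delta^+_x u^1_i-v^2_{i,0}\big)$ at $y=0$ (with $\delta^2=0$), while $v^3$ carries $\delta^2_i=-\frac{1}{d_3}\delta^+_x\eta(u^3_{i,N_y},u^4_i)$ at $y=\ell$ (with $\delta^1=0$). Because no $x$-derivative enters these equations, $x$ is a mere parameter, and I will work with explicit sums over $i$ weighted by $\gamma^1_i h_x$ rather than introduce new grids.

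Next I would reproduce the energy computation of Lemma~\ref{lem:discrete-energy-estimate} for $(v^2,v^3,w)$. For each fixed $i$ the $y$-direction Green formula of Lemma~\ref{lem:sp-props-2} applies slice-wise with the $\delta^1,\delta^2$ read off above; testing the $v^2$- and $v^3$-equations with $v^2,v^3$, summing in $j$ and then over $i$, and testing the $w$-ODE with $w$, I obtain a differential identity for $E(t)\coleq\|v^2\|_{\Fh}^2+\|v^3\|_{\Fh}^2+\|w\|_{\Gh}^2$ (sum-notation norms) carrying the dissipation $d_2\|\nabla_{yh}v^2\|_{\Hh}^2+d_3\|\nabla_{yh}v^3\|_{\Hh}^2$ on the left. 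It is essential to carry $w$ along, since the $y=\ell$ flux of $v^3$ and the $w$-ODE are coupled precisely through $\delta^+_x\eta$.

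The right-hand side is then estimated exactly as before. The $\zeta$-cross terms give $CE$ by Young's inequality. The $y=0$ term $Bi^M\big(v^2|_{y=0},H\delta^+_x u^1_h-v^2|_{y=0}\big)_{\Gh}$ retains the favourable $-Bi^M\|v^2|_{y=0}\|_{\Gh}^2$, while Young's inequality on the cross term leaves the source $C\|\nabla_h u^1_h\|_{\Eh}^2$, which lies in $L^1(S)$ by \eqref{eq:apriori-2}. For the two $\eta$-terms I would use the global Lipschitz bound $|\delta^+_x\eta(u^3_{i,N_y},u^4_i)|\le L_\eta\big(|v^3_{i,N_y}|+|w_i|\big)$, then Young's inequality together with the discrete trace inequality of Lemma~\ref{lem:discrete-trace} to control $\|v^3|_{y=\ell}\|_{\Gh}^2$ by $C(\|v^3\|_{\Fh}^2+\|\nabla_{yh}v^3\|_{\Hh}^2)$; choosing the Young parameter small lets the $\nabla_{yh}v^3$ part be absorbed into the dissipation. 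This yields $\deriv{E}{t}+c\big(\|\nabla_{yh}v^2\|_{\Hh}^2+\|\nabla_{yh}v^3\|_{\Hh}^2\big)\le C E+C\|\nabla_h u^1_h\|_{\Eh}^2$.

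I would finish with Gr\"onwall's inequality: since the source is integrable in time and $E(0)$ is bounded uniformly in $h$ by the regularity of the initial data (together with the choice of the projection operators), this gives $\max_{t\in S}E(t)\le C$, which is the first asserted bound. Integrating the differential inequality over $[0,T]$ and inserting the $L^\infty$-in-time control of $E$ yields $\int_0^T(\|\nabla_{yh}v^2\|_{\Hh}^2+\|\nabla_{yh}v^3\|_{\Hh}^2)\,\dt\le C$; observing that $(\nabla_{yh}v^2)_{i,j+1/2}=\delta^+_y\delta^+_x u^2_{ij}=\delta^+_x\delta^+_y u^2_{ij}$ (difference quotients commute), this is precisely the second asserted bound. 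I expect the main obstacle to be neither the Gr\"onwall step nor the $u^1$-source, but the careful bookkeeping of the shifted $x$-grid boundary fluxes and, above all, closing the nonlinear $v^3$--$w$ coupling at $y=\ell$ via the Lipschitz control of $\eta$ and the trace inequality, with the Young constants tuned so that the $y$-gradient of $v^3$ is reabsorbed.
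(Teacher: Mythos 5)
Your proposal is correct and follows the same core strategy as the paper: a discrete difference-quotient (Nirenberg-type) argument in $x$, followed by the energy computation of Lemma~\ref{lem:discrete-energy-estimate} for the differenced quantities, with the $y=\ell$ boundary term closed via the discrete trace inequality of Lemma~\ref{lem:discrete-trace} and absorption of the mixed-difference term, and Gr\"onwall at the end. The differences are organizational rather than substantive, but two of them are worth recording. First, the paper does not apply $\delta_x^+$ to the equations; it tests \eqref{eq:scheme-2}--\eqref{eq:scheme-3} with $-\delta_x^-(\vartheta_i^2\delta_x^+u^k_h)$ for a cutoff $\vartheta\in C_0^\infty(\Omega)$ and sums by parts in $x$ -- after the summation by parts this is your computation with the extra weight $\vartheta_i^2$; since \eqref{eq:scheme-2}--\eqref{eq:scheme-4} contain no $x$-differences, your cutoff-free version is legitimate and in fact matches the global-in-$x$ sums in the statement of the lemma better than the paper's own proof does. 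Second, for the $\eta$-term the paper uses the discrete product rule $\delta_x^+(RQ)=Q\,\delta_x^+R+R(\cdot_{i+1})\,\delta_x^+Q$ and discards the first piece by the monotonicity of $R$, estimating only the second by Lipschitz continuity of $Q$; your direct global-Lipschitz bound $|\delta_x^+\eta|\le L_\eta(|\delta_x^+u^3_{i,N_y}|+|\delta_x^+u^4_i|)$ forgoes that sign but lands on the same trace-plus-absorption estimate, so nothing is lost. Your decision to carry $w=\delta_x^+u^4_h$ inside the Gr\"onwall functional is actually more careful than the paper, whose final inequality \eqref{eq:improved-apriori-proof-2} leaves $\sum_i(\vartheta_i\delta_x^+u^4_i)^2$ on the right-hand side without saying how it is controlled -- exactly the coupling you close through the $w$-ODE. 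One shared caveat: both you and the paper need $E(0)$ (i.e.\ a discrete bound on $\delta_x^+u^2_h(0)$, $\delta_x^+u^3_h(0)$) uniform in $h$, which requires $x$-regularity of $u_2^0,u_3^0$ beyond what (A4) literally provides; you at least flag the dependence on the projection operators, but you should state the needed hypothesis explicitly.
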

\begin{proof}
  Following the steps of \cite[Theorem~5]{MunteanNeuss2010}, introduce a
  function $\vartheta\in C^\infty_0(\Omega)$ such that $0\leq\vartheta\leq 1$
  and let $\vartheta_h\coleq\vartheta|_{\Omega_h}\in\Gh$. Test
  \eqref{eq:scheme-2} with $-\delta_x^-(\vartheta^2_i\delta_x^+u^2_h)_{ij}$,
  \eqref{eq:scheme-3} with $-\delta_x^-(\vartheta^2_i\delta_x^+u^3_h)_{ij}$, and
  sum over $\omega_h$ to form relations analogous to \eqref{eq:discrete-weak-mod-2}, \eqref{eq:discrete-weak-mod-3}. We get
  \begin{multline*}
    -h_y\sum_{i=1}^{N_x}\sum_{j=0}^{N_y}\gamma^1_i\gamma^2_j \dot{u}^2_{ij}\delta_x^-(\vartheta_h^2\delta_x^+ u^2_h)_{ij}
    -d_2h_y\sum_{i=1}^{N_x}\sum_{j=0}^{N_y-1}\gamma^1_i\gamma^2_j \delta^+_y u^2_{ij} \delta^+_y(\delta^-_x(\vartheta_h^2\delta_x^+u^2_h))_{ij}\\
    = -Bi^m \sum_{i=1}^{N_x}\gamma^1_i (Hu^1_{i}-u^2_{i,0})\delta_x^-(\vartheta^2_i\delta_x^+ u^2_h)_{i,0}
    + \alpha h_y\sum_{i=1}^{N_x}\sum_{j=0}^{N_y}\gamma^1_i\gamma^2_j u^2_{ij}\delta_x^-(\vartheta^2_i\delta_x^+ u^2_h)_{ij}\\
    - \beta  h_y\sum_{i=1}^{N_x}\sum_{j=0}^{N_y}\gamma^1_i\gamma^2_j u^3_{ij}\delta_x^-(\vartheta^2_i\delta_x^+ u^2_h)_{ij},
  \end{multline*}
  \begin{multline*}
    -h_y\sum_{i=1}^{N_x}\sum_{j=0}^{N_y}\gamma^1_i\gamma^2_j \dot{u}^3_{ij}\delta_x^-(\vartheta_h^2\delta_x^+ u^3_h)_{ij}
    -d_3h_y\sum_{i=1}^{N_x}\sum_{j=0}^{N_y-1}\gamma^1_i\gamma^2_j \delta^+_y u^3_{ij} \delta^+_y(\delta^-_x(\vartheta_h^2\delta_x^+u^3_h))_{ij}\\
    =
    \sum_{i=1}^{N_x}\gamma^1_i \eta(u^3_{i,N_y},u^4_i)\delta_x^-(\vartheta^2_i\delta_x^+ u^3_h)_{i,N_y}
    - \alpha h_y\sum_{i=1}^{N_x}\sum_{j=0}^{N_y}\gamma^1_i\gamma^2_j u^2_{ij}\delta_x^-(\vartheta^2_i\delta_x^+ u^3_h)_{ij}\\
    + \beta  h_y\sum_{i=1}^{N_x}\sum_{j=0}^{N_y}\gamma^1_i\gamma^2_j u^3_{ij}\delta_x^-(\vartheta^2_i\delta_x^+ u^3_h)_{ij}.
  \end{multline*}
  Summing the previous two equalities and using the discrete Green's theorem analogous
  to \eqref{eq:sp-Gh-prop-1}, Schwarz's inequality and Young's inequality we obtain
  \begin{multline}\label{eq:improved-apriori-proof-1}
    \frac{1}{2}\deriv{}{t}\Big(
    h_y\sum_{i=0}^{N_x-1}\sum_{j=0}^{N_y}|\vartheta_i\delta_x^+u^2_{ij}|^2
    + h_y\sum_{i=0}^{N_x-1}\sum_{j=0}^{N_y}|\vartheta_i\delta_x^+u^3_{ij}|^2
    \Big)
    + d_2h_y\sum_{i=0}^{N_x-1}\sum_{j=0}^{N_y-1}|\vartheta_i\delta^+_x\delta^+_yu^2_{ij}|^2\\
    + d_3h_y\sum_{i=0}^{N_x-1}\sum_{j=0}^{N_y-1}|\vartheta_i\delta^+_x\delta^+_yu^3_{ij}|^2
    \leq Bi^m H C_1 \sum_{i=0}^{N_x-1}|\vartheta_i\delta_x^+u^1_{i}|^2
    +
    C_2h_y\sum_{i=0}^{N_x-1}\sum_{j=0}^{N_y}(\vartheta_i\delta_x^+u^2_{ij})^2\\
    + C_3h_y\sum_{i=0}^{N_x-1}\sum_{j=0}^{N_y}(\vartheta_i\delta_x^+u^3_{ij})^2
    - \sum_{i=0}^{N_x-1}(\delta_x^+\eta(u^3_{i,N_y},u^4_i))(\vartheta_i^2\delta_x^+u^3_{i,N_y}).
  \end{multline}
  We rewrite the last term on the right-hand side as
  \begin{multline*}
    -k\sum_{i=0}^{N_x-1}(\delta_x^+(R(u^3_{i,N_y})Q(u^4_i)))(\vartheta_i^2\delta_x^+u^3_{i,N_y})
    \\
    = -k\sum_{i=0}^{N_x-1}\Big(Q(u^4_i)\delta_x^+R(u^3_{i,N_y}) + R(u^3_{i+1,N_y})\delta_x^+Q(u^4_i)\Big)(\vartheta_i^2\delta_x^+u^3_{i,N_y})
    \\
    = \underbrace{-k\sum_{i=0}^{N_x-1}\vartheta_i^2Q(u^4_i)\delta_x^+R(u^3_{i,N_y})\delta_x^+u^3_{i,N_y}}_{\leq
    0}-k\sum_{i=0}^{N_x-1}R(u^3_{i+1,N_y})\delta_x^+Q(u^4_i)(\vartheta_i^2\delta_x^+u^3_{i,N_y}),
  \end{multline*}
  where we used the monotonicity of $R$ and boundedness of $Q$. To estimate the
  last term we exploit the Lipschitz continuity and boundedness of $Q$ and use
  the discrete trace theorem so that
  \begin{multline*}
    -k\sum_{i=0}^{N_x-1}R(u^3_{i+1,N_y})\delta_x^+Q(u^4_i)(\vartheta_i^2\delta_x^+u^3_{i,N_y})
    \leq
    C_4\sum_{i=0}^{N_x-1}\vartheta_i^2|\delta_x^+u^3_{i,N_y}\delta_x^+u^4_i| \\
    \leq
    \frac{C_4\varepsilon}{2}\sum_{i=0}^{N_x-1}(\vartheta_i\delta_x^+u^3_{i,N_y})^2
    +\frac{C_4}{2\varepsilon}\sum_{i=0}^{N_x-1}(\vartheta_i\delta_x^+u^4_i)^2
    \leq
    C_5\varepsilon h_y\sum_{i=0}^{N_x-1}\sum_{j=0}^{N_y}(\vartheta_i\delta_x^+u^3_{ij})^2
    \\
    +C_5\varepsilon h_y\sum_{i=0}^{N_x-1}\sum_{j=0}^{N_y-1}(\vartheta_i\delta_x^+\delta_y^+u^3_{ij})^2
    +\frac{C_4}{2\varepsilon}\sum_{i=0}^{N_x-1}(\vartheta_i\delta_x^+u^4_i)^2.
  \end{multline*}
  Using the latter result in \eqref{eq:improved-apriori-proof-1}, we arrive at
  \begin{multline}\label{eq:improved-apriori-proof-2}
    \frac{1}{2}\deriv{}{t}\Big(
    h_y\sum_{i=0}^{N_x-1}\sum_{j=0}^{N_y}(\vartheta_i\delta_x^+u^2_{ij})^2
    + h_y\sum_{i=0}^{N_x-1}\sum_{j=0}^{N_y}(\vartheta_i\delta_x^+u^3_{ij})^2
    \Big)
    +d_2h_y\sum_{i=0}^{N_x-1}\sum_{j=0}^{N_y-1}(\vartheta_i\delta^+_x\delta^+_yu^2_{ij})^2 \\
    +(d_3 - C_5\varepsilon) h_y \sum_{i=0}^{N_x-1}\sum_{j=0}^{N_y-1}(\vartheta_i\delta_x^+\delta_y^+u^3_{ij})^2
    \leq
    Bi^m H C_1 \sum_{i=0}^{N_x-1}|\vartheta_i\delta_x^+u^1_{i}|^2
    + C_2h_y\sum_{i=0}^{N_x-1}\sum_{j=0}^{N_y}(\vartheta_i\delta_x^+u^2_{ij})^2
    \\
    + (C_3+C_5\varepsilon)h_y\sum_{i=0}^{N_x-1}\sum_{j=0}^{N_y}(\vartheta_i\delta_x^+u^3_{ij})^2
    +\frac{C_4}{2\varepsilon}\sum_{i=0}^{N_x-1}(\vartheta_i\delta_x^+u^4_i)^2.
  \end{multline}
  Applying Gronwall's inequality and integrating with respect to time we obtain
  the claim of the lemma.
\end{proof}

% \begin{lem}[Discrete Gronwall's lemma in general sum form]\label{lem:ac-gronwall}
    % Let $\{a_n\},\{b_n\} \subset \R$, $\lambda>0$, $\tau>0$ and
    % $\theta\in [0,1]$ such that $1-\theta\lambda\tau>0$. Then the
    % inequality
    % \[
    % a_n \leq b_n + \lambda \sum_{j=1}^n \tau \big( (1-\theta) a_{j-1} + \theta
    % a_j \big), \quad \forall n\in\N
    % \]
    % with $a_1 \leq b_1$ implies
    % \[
    % a_n \leq b_n + \frac{\lambda \tau}{1-\theta \lambda \tau}
    % \sum_{j=1}^n \left( \frac{1+(1-\theta)\lambda \tau}{1-\theta\lambda\tau}
    % \right)^{n-j}\big( (1-\theta)b_{j-1} + \theta b_j\big), \quad \forall
    % n\in\N.
    % \]
% \end{lem}

% \begin{rem}
    % If $\{b_n\} \equiv b$ is constant, then the claim of the previous lemma can
    % be simplified to
    % \begin{equation}\label{eq:ac-gronwall-simple}
        % a_n \leq b \left( \frac{1+(1-\theta)\lambda\tau}{1-\theta\lambda\tau}
        % \right)^n, \quad \forall n\in\N.
    % \end{equation}
% \end{rem}

\section{Interpolation and compactness}\label{compactness}

In this section, we derive sufficient results that enable us to show the
convergence of semi-discrete solutions of \eqref{eq:model}. To this end, we
firstly introduce extensions of grid functions so that they are defined almost
everywhere in $\Omega$ and $\omega$ and can be studied by the usual techniques
of Lebesgue/Sobolev/Bochner spaces. Finally, we use the {\em a~priori} estimates
proved in section \ref{estimates} to show the necessary compactness for the
sequences of extended grid functions.

\subsection{Interpolation}

In this subsection we introduce extensions of grid functions so that they are
defined almost everywhere in $\Omega$ and $\omega$.

\begin{defn}[Dual and simplicial grids on $\Omega$]
  Let $\Omegah$ be a grid on $\Omega$ as defined in Section \ref{sec:grids}.
  Define the dual grid $\OmegahD$ as
  \[
  \OmegahD \coleq \{ \KD_{i} \subset \bar{\Omega} \ |\ \KD_i\coleq [ x_i-h_x/2, x_i+h_x/2 ]\cap\bar{\Omega},\ x_i\in\Omega_h \},
  \]
  and the simplicial grid $\OmegahS$ as
  \[
  \OmegahS \coleq \{ \KS_{i} \subset \bar{\Omega} \ |\ \KS_i\coleq [ x_i, x_{i+1} ]\cap\bar{\Omega},\ x_i\in\Omega_h \}.
  \]
\end{defn}

\begin{defn}[Dual and simplicial grids on $\Omega\times Y$]
  Let $\omegah$ be a grid on $\Omega\times Y$ as defined in Section \ref{sec:grids}.
  Define the dual grid $\omegahD$ as
  \begin{multline*}
    \omegahD \coleq \{ \LD_{ij} \subset \bar{\Omega}\times\bar{Y} \ |\
    \LD_{ij}\coleq [ x_i-h_x/2, x_i+h_x/2 ]\\
    \times[y_j-h_y/2,
    y_j+h_y/2]\cap\bar{\Omega}\times\bar{Y},\ x_i\in\Omega_h, y_j\in Y_h \},
  \end{multline*}
  and the simplicial grid $\omegahS$ as
  $\omegahS\coleq\omegah^{\triangledl}\cup\omegah^{\triangleur}$, where
  \begin{align*}
    \omegah^{\triangledl}&\coleq \big\{ \LSDL_{ij}\ |\ \LSDL_{ij} \coleq
    \big[(x_i,y_j),(x_{i+1},y_j),(x_i,y_{j+1})\big]_{\varkappa}\cap\bar{\Omega}\times\bar{Y},
     i=0,\dotsc,N_x-1,j=0,\dotsc,N_y-1 \big\},\\
    \omegah^{\triangleur}&\coleq \big\{ \LSUR_{ij}\ |\ \LSUR_{ij} \coleq
    \big[(x_{i+1},y_{j+1}),(x_{i+1},y_j),(x_i,y_{j+1})\big]_{\varkappa}\cap\bar{\Omega}\times\bar{Y},
    i=0,\dotsc,N_x-1,j=0,\dotsc,N_y-1 \big\},
  \end{align*}
  where $[\mrm{x},\mrm{y},\mrm{z}]_{\varkappa}$ denotes convex hull of points
  $\mrm{x},\mrm{y},\mrm{z}\in\R^2$.
\end{defn}

\begin{defn}[Piecewise constant extension]
  For a grid function $u_h$ we define its piecewise constant extension
  $\bar{u}_h$ as
  \begin{equation}\label{eq:pwc-extension}
    \bar{u}_h(x) =
    \begin{cases}
      u_i,& x\in\KD_i,\ u_h\in\Gh,\\
      u_{ij},& x\in\LD_{ij},\ u_h\in\Fh.
    \end{cases}
  \end{equation}
\end{defn}

\begin{defn}[Piecewise linear extension]
  For a grid function $u_h\in\Gh$ we define its piecewise linear extension
  $\hat{u}_h$ as
  \begin{equation}\label{eq:pwl-extension-Gh}
    \hat{u}_h(x) =
      u_i + (\nabla_h u_h)_{i+1/2}(x-x_i),\quad x\in\KS_i,\ u_h\in\Gh,
  \end{equation}
  while for $u_h\in\Fh$ we define it as
  \begin{equation}\label{eq:pwl-extension-Fh}
    \hat{u}_h(x) =
      \begin{cases}
        u_{ij} + \delta_x^+u_{ij}(x-x_{i}) + (\nabla_{yh}u_h)_{i,j+1/2}(y-y_j),& x\in\LSDL_{ij},\\
        u_{i+1,j+1} + \delta_x^+u_{i,j+1}(x_{i+1}-x) +
        (\nabla_{yh}u_h)_{i+1,j+1/2}(y_j-y),& x\in\LSUR_{ij}.
      \end{cases}
  \end{equation}

\end{defn}

The following lemma shows the relation between discrete scalar products of grid
functions and scalar products of interpolated grid functions in $L^2(\Omega)$
and $L^2(\Omega\times Y)$ and follows by a direct calculation.

\begin{lem}\label{lem:extensions-props}
  It holds that
  \begin{align*}
    (\bar{u}_h,\bar{v}_h)_{L^2(\Omega)} &= (u_h,u_h)_{\Gh},& u_h,v_h&\in\Gh, \\
    (\nabla\hat{u}_h,\nabla\hat{v}_h )_{L^2(\Omega)} &= (\nabla_h u_h,\nabla_h v_h)_{\Eh},& u_h,v_h&\in\Gh, \\
    (\bar{u}_h,\bar{v}_h)_{L^2(\Omega\times Y)} &= (u_h,v_h)_{\Fh},& u_h,v_h&\in\Fh,\\
    (\nabla_y \hat{u}_h,\nabla_y\hat{v}_h)_{L^2(\Omega\times Y)} &= (\nabla_{yh} u_h,\nabla_{yh} v_h)_{\Hh},& u_h,v_h&\in\Fh.
  \end{align*}
\end{lem}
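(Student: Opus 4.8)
The plan is to verify each of the four identities by a direct, cell-by-cell evaluation of the $L^2$ integral, exploiting that both extensions are defined piecewise over the dual, respectively simplicial, grids and that on each cell the relevant integrand is either piecewise constant or an exact constant. The essential point in every case is that the Lebesgue measure of each extension cell reproduces precisely the weights $\gamma_i^1$, $\gamma_j^2$ and the step sizes $h_x$, $h_y$ appearing in the discrete scalar products, so that the integral collapses onto the corresponding discrete sum.

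For the first identity I would note that $\bar u_h\bar v_h$ takes the constant value $u_iv_i$ on each dual cell $\KD_i$, whence $(\bar u_h,\bar v_h)_{L^2(\Omega)}=\sum_{i=0}^{N_x}|\KD_i|\,u_iv_i$. Since $\KD_i=[x_i-h_x/2,x_i+h_x/2]\cap\bar\Omega$ has length $h_x$ for $1\le i\le N_x-1$ and length $h_x/2$ for $i\in\{0,N_x\}$ (the intersection with $\bar\Omega$ halving the boundary cells), one has $|\KD_i|=\gamma_i^1 h_x$, and the sum is exactly $(u_h,v_h)_{\Gh}$. The third identity follows in the same fashion over the rectangular dual cells $\LD_{ij}$, whose area is $|\LD_{ij}|=\gamma_i^1\gamma_j^2 h_xh_y$ by the same halving at the lateral and at the top/bottom boundaries; summing $u_{ij}v_{ij}|\LD_{ij}|$ reproduces $(u_h,v_h)_{\Fh}$.

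For the second identity I would use that on each simplex $\KS_i=[x_i,x_{i+1}]$ the piecewise linear extension $\hat u_h$ has constant derivative $(\nabla_h u_h)_{i+1/2}$; since the simplices $\KS_i$, $i=0,\dots,N_x-1$, partition $\Omega$ into intervals of full length $h_x$ with no boundary weighting, integration gives $\sum_{i=0}^{N_x-1}h_x(\nabla_h u_h)_{i+1/2}(\nabla_h v_h)_{i+1/2}=(\nabla_h u_h,\nabla_h v_h)_{\Eh}$.

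The fourth identity is the one that will require the most care, and I expect it to be the main obstacle---not because it is deep, but because of the bookkeeping induced by the triangular mesh. On the lower-left triangle $\LSDL_{ij}$ the extension has $\partial_y\hat u_h=(\nabla_{yh}u_h)_{i,j+1/2}$, whereas on the upper-right triangle $\LSUR_{ij}$ it has $\partial_y\hat u_h=-(\nabla_{yh}u_h)_{i+1,j+1/2}$; in either case $\partial_y\hat u_h$ is constant and each triangle has area $h_xh_y/2$. Summing the products $\partial_y\hat u_h\,\partial_y\hat v_h$ over both triangles of every cell $i=0,\dots,N_x-1$, $j=0,\dots,N_y-1$ (the two sign factors cancelling in the upper-right contribution), and then reindexing $i\mapsto i-1$ in that contribution, I would combine the two sums into a single sum over $i=0,\dots,N_x$. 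The key observation is that each interior index $1\le i\le N_x-1$ receives contributions of $h_xh_y/2$ from both a lower-left and an upper-right triangle (total $h_xh_y$), while each lateral-boundary index $i\in\{0,N_x\}$ receives only one such contribution (weight $h_xh_y/2$); this is exactly the weighting $\gamma_i^1 h_xh_y$ in the definition of $(\cdot,\cdot)_{\Hh}$, so the sum equals $(\nabla_{yh}u_h,\nabla_{yh}v_h)_{\Hh}$. Note that the $x$-parts of the gradients never enter here, since only $\partial_y$ is integrated.
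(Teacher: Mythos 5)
Your proof is correct and is precisely the ``direct calculation'' that the paper invokes without writing out: the measure of each dual cell reproduces the weights $\gamma_i^1 h_x$ (resp.\ $\gamma_i^1\gamma_j^2 h_xh_y$), and in the fourth identity the two triangles of each cell each contribute area $h_xh_y/2$, recombining after the index shift into the weights $\gamma_i^1 h_xh_y$ of $(\cdot,\cdot)_{\Hh}$ exactly as you describe. No gaps; you have also tacitly corrected the obvious typo $(u_h,u_h)_{\Gh}$ for $(u_h,v_h)_{\Gh}$ in the first identity.
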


\subsection{Compactness}

In this subsection we prove our main result. To do this we essentially use the
preliminary results shown in the previous paragraphs and the results of
\cite{Ladyzhenskaya}. Basically, we show the convergence of semi-discrete
solutions to a weak solution of problem (P). This result is stated in the
following theorem.

\begin{thm} \label{thm:convergence}
  Assume (A1)--(A4) to be fulfilled. Then the semi-discrete solution
  $\{u_h^1$, $u_h^2$, $u_h^3$, $u_h^4\}$ of \eqref{eq:model} exists on $[0,T]$ for any $T>0$ and
  its interpolate $\{\hat{u}_h^1$, $\hat{u}_h^2$, $\hat{u}_h^3$,
  $\hat{u}_h^4\}$ converge in $L^2(\Omega)$, $L^2(\Omega\times Y)$,
  $L^2(\Omega\times S)$, $L^2(\Omega)$, respectively, as $|h|\to 0$ to a weak
  solution $(u_1$, $u_2$, $u_3$, $u_4)$ to problem (P) in the sense of Definition
  \ref{def:weak}.
\end{thm}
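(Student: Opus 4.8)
The plan is to pass to the limit $|h|\to 0$ in the discrete weak formulation \eqref{eq:discrete-weak-mod-1}--\eqref{eq:discrete-weak-mod-4} by extracting a convergent subsequence of interpolates and identifying its limit as a weak solution. First I would collect the uniform bounds: Lemma~\ref{lem:extensions-props} translates the discrete energy estimates of Lemma~\ref{lem:discrete-energy-estimate} and the time-derivative estimates of Lemma~\ref{lem:time-derivative-estimate} into $|h|$-independent bounds on the piecewise-linear interpolates $\hat u_h^i$ in $L^\infty(S;L^2)$, on $\nabla\hat u_h^1$, $\nabla_y\hat u_h^i$ ($i=2,3$) in $L^2(S\times\Omega)$, $L^2(S\times\Omega\times Y)$, and on the time derivatives $\partial_t\hat u_h^i$ in the same spaces. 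By weak/weak-$*$ compactness these yield limits $u_i$ with $u_1\in L^2(S;H^1_0(\Omega))$, $u_{2,3}\in L^2(S;L^2(\Omega;H^1(Y)))$, $\partial_t u_i\in L^2$, matching the regularity \eqref{eq:weak-spaces-1}--\eqref{eq:weak-spaces-5} required in Definition~\ref{def:weak}.

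Next I would upgrade weak to strong convergence where it is needed for the nonlinear terms. The crucial extra ingredient is the improved estimate of Lemma~\ref{lem:improved-apriori}, which controls the tangential ($x$-direction) difference quotients $\delta_x^+ u_h^{2,3}$ and the mixed quotients $\delta_x^+\delta_y^+ u_h^{2,3}$. Together with the bounds on $\nabla_y$ and on $\partial_t$, this gives equicontinuity of the interpolates in \emph{all} variables, so a Kolmogorov--Riesz or Aubin--Lions--Simon compactness argument (in the form available in \cite{Ladyzhenskaya}) upgrades $\hat u_h^{2},\hat u_h^3\to u_2,u_3$ to strong convergence in $L^2(S\times\Omega\times Y)$ and, via the discrete trace inequality Lemma~\ref{lem:discrete-trace}, yields strong convergence of the traces $u_h^3|_{y=\ell}$ in $L^2(S\times\Omega)$. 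I would handle $u_h^4$ separately: since it solves the ode \eqref{eq:scheme-4} driven by $\eta(u_h^3|_{y=\ell},u_h^4)$ and $\eta$ is globally Lipschitz, a discrete Gr\"onwall argument plus the strong trace convergence gives strong convergence of $\hat u_h^4$ in $L^2(\Omega\times S)$.

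With these convergences in hand I would pass to the limit term by term. The linear diffusion and time-derivative terms pass by weak convergence tested against fixed smooth $\varphi$ (interpolating $\varphi$ onto the grid and using Lemma~\ref{lem:extensions-props} to match discrete and continuous scalar products). The reaction term $\zeta(u_2,u_3)=\alpha u_2-\beta u_3$ is affine, so weak convergence suffices there; the genuinely nonlinear terms are $\eta(u_h^3|_{y=\ell},u_h^4)$ in \eqref{eq:discrete-weak-mod-3}--\eqref{eq:discrete-weak-mod-4} and the micro-macro coupling trace $u_h^2|_{y=0}$ in \eqref{eq:discrete-weak-mod-1}--\eqref{eq:discrete-weak-mod-2}. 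These require the \emph{strong} trace convergence together with the Lipschitz/continuity of $\eta$ to conclude $\eta(u_h^3|_{y=\ell},u_h^4)\to\eta(u_3|_{y=\ell},u_4)$ in $L^2(S\times\Omega)$. The main obstacle is precisely establishing the strong convergence of the boundary traces $u_h^2|_{y=0}$ and $u_h^3|_{y=\ell}$: the discrete trace inequality only bounds traces in terms of interior norms and gradients, so one must combine the tangential-derivative estimates of Lemma~\ref{lem:improved-apriori} with the compactness step to control trace oscillations in the $x$-variable, and then verify that the limit of the discrete trace coincides with the genuine trace of the limit function $u_i$. Once the limiting identities are verified for all $\varphi$ in a dense set and extended by density to $H^1_0(\Omega)\times[L^2(\Omega;H^1(Y))]^2$, uniqueness of the weak solution (cited from \cite{ChalupeckyEtAl2010}) shows the whole sequence converges, completing the proof.
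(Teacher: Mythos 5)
Your overall strategy coincides with the paper's: transfer the uniform bounds of Lemmas \ref{lem:discrete-energy-estimate} and \ref{lem:time-derivative-estimate} to the interpolates via Lemma \ref{lem:extensions-props}, extract weak limits, use an Aubin--Lions step for strong convergence, and pass to the limit with the nonlinear boundary terms handled through strong trace convergence. One genuine difference is the ODE: the paper does not run a Lipschitz--Gr\"onwall comparison of two discrete solutions but invokes a monotonicity argument from \cite{Tasnim2scale} to show that $\bar{u}_{h_n}^4$ is Cauchy. Your Gr\"onwall route is a legitimate alternative since $\eta$ is assumed globally Lipschitz, but note the ordering: you need the strong convergence of the trace of $u_h^3$ at $y=\ell$ \emph{before} Gr\"onwall can be applied to $u_h^4$, so the treatment of $u^4$ cannot be decoupled from the compactness step as cleanly as your second paragraph suggests.

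The more important issue is the step you yourself flag as ``the main obstacle,'' namely strong convergence of the traces $u_h^2|_{y=0}$ and $u_h^3|_{y=\ell}$: here your proposal names the difficulty but does not supply the mechanism, and the discrete trace inequality \eqref{eq:discrete-trace} cannot do the job on its own --- applied to a difference $u_{h_n}-u_{h_m}$ it bounds the trace by the full $H^1$-in-$y$ norm of the difference, which is only uniformly bounded along the sequence, not small. The paper's resolution is the following chain: Lemma \ref{lem:improved-apriori} gives a uniform bound in $L^2(S;H^1(\Omega;H^1(Y)))$; the embedding $H^1(\Omega;H^1(Y))\hookrightarrow L^2(\Omega;H^\beta(Y))$ is compact for $\frac{1}{2}<\beta<1$, so Lions--Aubin (with the time-derivative bounds) yields strong convergence in $L^2(S;L^2(\Omega;H^\beta(Y)))$; and since the trace operator $H^\beta(Y)\hookrightarrow L^2(\partial Y)$ is continuous precisely for $\beta>\frac{1}{2}$, strong convergence of the traces follows at once. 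An equivalent fix within your framework would be the multiplicative trace inequality $\|v\|_{L^2(\partial Y)}^2\leq C\,\|v\|_{L^2(Y)}\big(\|v\|_{L^2(Y)}+\|\partial_y v\|_{L^2(Y)}\big)$ applied to differences, which converts interior strong $L^2$ convergence plus uniform gradient bounds into trace convergence. Either version of this idea is what your outline is missing to become a proof; the remainder (affine $\zeta$, weak passage in the linear terms, identification of initial data, and the uniqueness argument for whole-sequence convergence) is sound and in places more explicit than the paper itself.
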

\begin{proof}
  We start off with recovering the initial data. The definition of interpolation
  of grid functions leads, as $|h|\to 0$, to
\begin{align*}
  \hat{u}^1_h(0) &\to u_1^0 \text{ weakly in } H^1(\Omega),\\
  \hat{u}^2_h(0) &\to u_2^0 \text{ weakly in } L^2(\Omega;H^1(Y)),\\
  \hat{u}^3_h(0) &\to u_3^0 \text{ weakly in } L^2(\Omega;H^1(Y)),\\
  \hat{u}^4_h(0) &\to u_4^0 \text{ weakly in } L^2(\Omega).
\end{align*}
%Hence, from the {\em a~priori} estimates it follows that
%\begin{align*}
%  u^1_h &\in L^\infty(0,T;\Gh),& \nabla_h u^1_h &\in L^2(0,T;\Eh),\\
%  u^2_h &\in L^\infty(0,T;\Fh),& \nabla_{yh} u^2_h &\in L^2(0,T;\Hh),\\
%  u^3_h &\in L^\infty(0,T;\Fh),& \nabla_{yh} u^3_h &\in L^2(0,T;\Hh),\\
%  u^4_h &\in L^\infty(0,T;\Gh),
%\end{align*}
%are bounded uniformly with respect to $h$. Therefore, $T_h$ is
%independent of $h$ and the solution of \eqref{eq:scheme} can be
%prolonged up to any $T>0$.

Let $h_n$ be a sequence of spatial space sizes such that $|h|\to 0$ as
$n\to\infty$. Consequently, we obtain a sequence of solutions
$\{u_{h_n}^1,u_{h_n}^2,u_{h_n}^3,u_{h_n}^4\}$ of \eqref{eq:scheme} defined on
the whole time interval $S$.

Let us pass to the limit $|h|\to 0$ in the ODE. Note that
$\eta(\bar{u}^3_{h_n}|_{y=\ell}, \bar{u}^4_{h_n} ) \weakto q$ weakly in
$L^2(S;L^2(\Omega))$, and $q$ still needs to be identified. The way we pass to
the limit in the ODE is based on the following monotonicity-type argument (see
\cite{Tasnim2scale}): using the monotonicity of $\eta$ w.r.t. both variables,
we can show that $\bar{u}_{h_n}^4$ is a Cauchy sequence, and therefore, it is
strongly convergent to $u^{4}$.

Now, it only remains to pass to the limit in the PDEs. Note that the weak
formulation contains a nonlinear boundary term involving $\eta(\cdot,\cdot)$.
Exploiting the properties of the interpolations of grid functions we deduce that
the same {\em a~priori} estimates hold also for the interpolated solution (see
also \cite{Ladyzhenskaya}). On this way, we obtain
\begin{align*}
  \{\hat{u}_{h_n}^1\} &\text{ is bounded in } L^\infty(0,T;L^2(\Omega)),\\
  \{\hat{u}_{h_n}^1\} &\text{ is bounded in } L^2(0,T;H^1(\Omega)),\\
  \{\hat{u}_{h_n}^2\} &\text{ is bounded in } L^\infty(0,T;L^2(\Omega)),\\
  \{\hat{u}_{h_n}^3\} &\text{ is bounded in } L^\infty(0,T;L^2(\Omega)),\\
  \{\hat{u}_{h_n}^4\} &\text{ is bounded in } L^\infty(0,T;L^2(\Omega)).
\end{align*}
Hence, there exists a subsequence of $h_n$ (denoted again by $h_n$), such that
\begin{align*}
  %\Sinterp u_{h_n}^1 &\weakto u^{1,1} \text{ weakly in } L^2(0,T;L^2(\Omega)),\\
  \hat{u}_{h_n}^1 &\weakto u^{1} \text{ weakly in } L^2(S;H^1(\Omega)),\\
  \hat{u}_{h_n}^2 &\weakto u^{2} \text{ weakly in } L^2(S;L^2(\Omega)),\\
  \hat{u}_{h_n}^3 &\weakto u^{3} \text{ weakly in } L^2(S;L^2(\Omega)),\\
  \hat{u}_{h_n}^4 &\weakto u^{4} \text{ weakly in } L^2(S;L^2(\Omega)).
\end{align*}

Since
$$
\|\hat{u}_{h_n}^1\|_{L^2(S,H^1(\Omega))} + \|\partial_t\hat{u}_{h_n}^1\|_{L^2(S,L^2(\Omega))}\leq C,
$$
Lions-Aubin's compactness theorem, see \cite[Theorem~1]{Lions}, implies
that there exists a subset (again denoted by $\hat{u}_{h_n}^1 $) such that
$$
\hat{u}_{h_n}^1 \longrightarrow u^1 \quad \text{strongly in} \quad L^2(S\times\Omega).
$$
To get the desired strong convergence for the cell solutions
$\hat{u}_{h_n}^2,\hat{u}_{h_n}^3$, we need the higher regularity with respect to
the variable $x$, proved in Lemma \ref{lem:improved-apriori}. We remark that the
two-scale regularity estimates imply that
$$
\|\hat{u}_{h_n}^2\|_{L^2(S;H^1(\Omega,H^1(Y)))} +
\|\hat{u}_{h_n}^3\|_{L^2(S;H^1(\Omega,H^1(Y)))} \leq C.
$$
Moreover, from Lemma \ref{lem:time-derivative-estimate}, we have that
$$
\|\partial_t\hat{u}_{h_n}^2\|_{L^2(S\times\Omega\times Y)} +
\|\partial_t\hat{u}_{h_n}^3\|_{L^2(S\times\Omega\times Y)} \leq C.
$$
Since the embedding
$$
H^1(\Omega, H^1(Y))\hookrightarrow L^2(\Omega, H^\beta(Y))
$$
is compact for all $\frac{1}{2} <\beta < 1$, it follows again from Lions-Aubin's
compactness theorem that there exist subsequences (again denoted
$\hat{u}_{h_n}^2,\hat{u}_{h_n}^3$), such that
\begin{equation}\label{strongconvHbeta}
  (\hat{u}_{h_n}^2,\hat{u}_{h_n}^3) \longrightarrow (u^2,u^3)\quad \text{strongly in}\quad L^2(S\times L^2(\Omega, H^\beta(Y)),
\end{equation}
for all $\frac{1}{2} <\beta < 1$. Now, (\ref{strongconvHbeta}) together with the
continuity of the trace operator
$$
H^\beta(Y) \hookrightarrow L^2(\partial Y), \quad \text{for} \quad \frac{1}{2}
<\beta < 1,
$$
yield the strong convergence of $\hat{u}_{h_n}^2$, $\hat{u}_{h_n}^3$ until the
boundary $y=0$.
\end{proof}

%%%%%%%%%%%%%%%%%%%%%%%%%%%%%%%%%%%%%%%%%%%%%%%%%%%%%%%%%%%%%%%%%%%%%%%%%%%%%%%%
\section{Numerical illustration of the two-scale FD  scheme}\label{illustration}

We close the paper with illustrating the behavior of the main chemical species
driving the whole corrosion process, namely of ${\rm H_2S(g)}$, and also the one
of the corrosion product -- the gypsum. To do these computations we use the
reference parameters reported in \cite{ChalupeckyEtAl2010}.

\begin{figure}[ht]
  \begin{center}
    \includegraphics[width=0.4885\textwidth]{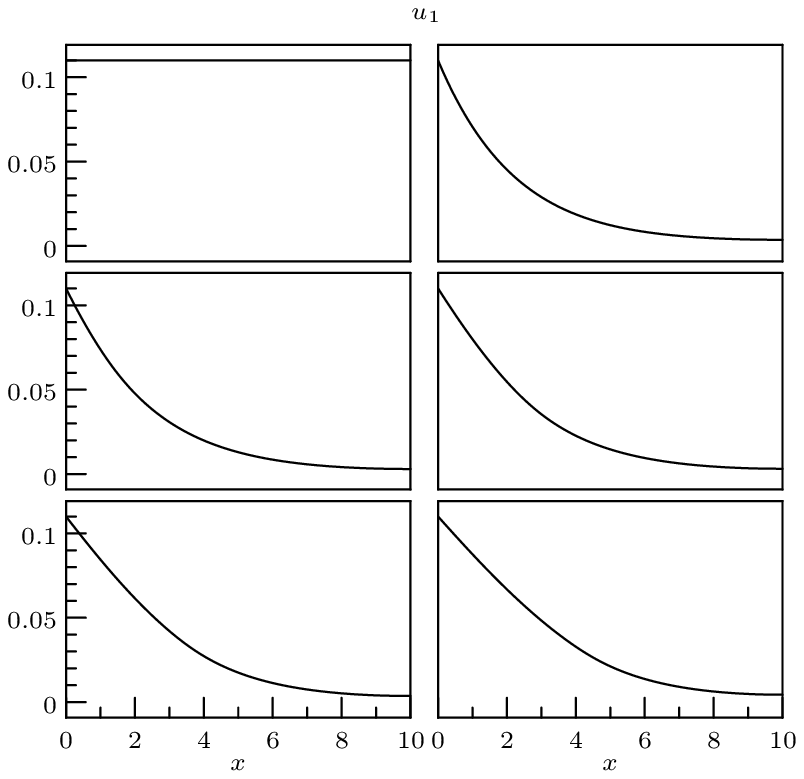}
    \includegraphics[width=0.48\textwidth]{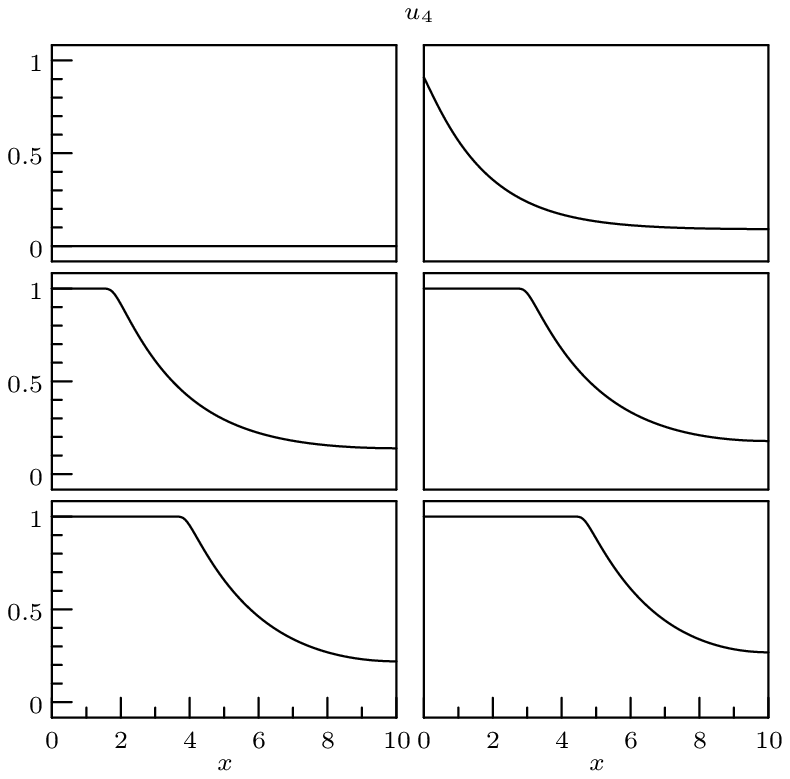}
  \end{center}
  \caption{Illustration of concentration profiles for the macroscopic
  concentration of gaseous $\mathrm{H_2S}$ (left) and of gypsum (right). Graphs
  plotted at times $t\in\{0, 80, 160, 240, 320, 400\}$ in a left-to-right and
  top-to-bottom order.}
  \label{fig:num-illust}
\end{figure}

Figure \ref{fig:num-illust} shows the evolution of $u_1(x,t)$ and $u_4(x,t)$ as
time elapses. Interestingly, although the behavior of $u_1$ is as expected
(i.e., purely diffusive), we notice that a {\em macroscopic} gypsum layer
(region where $u_4$ is produced) is formed (after a transient time $t^*>80$) and
grows in time. The figure clearly indicates that there are two distinct regions
separated by a slowly moving intermediate layer: the left region -- the place
where the gypsum production reached saturation (a threshold), and the right
region -- the place of the ongoing sulfatation reaction \eqref{eq:sulfatation}
(the gypsum production has not yet reached here the natural threshold). The
precise position of the separating layer is {\em a~priori unknown}. To capture
it simultaneously with the computation of the concentration profile would
require a moving-boundary formulation similar to the one reported in
\cite{BoehmEtAl1998}.

\paragraph*{Acknowledgements}
We acknowledge fruitful discussions with Maria Neuss-Radu (Heidelberg) and Omar
Lakkis (Sussex). V.~Ch. was supported by Global COE Program ``Education and
Research Hub for Ma\-the\-ma\-tics-for-Industry'' from the Ministry of Education,
Culture, Sports, Science and Technology, Japan. Part of this paper was written
while A.~M. visited the Faculty of Mathematics of the Kyushu University.

%%%%%%%%%%%%%%%%%%%%%%%%%%%%%%%%%%%%%%%%%%%%%%%%%%%%%%%%%%%%%%%%%%%%%%%%%%%%%%%%

\end{document}